\numberwithin{equation}{section}
\newcommand\NoBlackBoxes{\global\overfullrule0pt}
\newtheorem{definition}{Definition}[section]
\newtheorem{theorem}[definition]{Theorem}
\newtheorem{lemma}[definition]{Lemma}
\theoremstyle{remark}
\newtheorem{rem}[definition]{Remark}
\newtheorem{remark}[definition]{Remark}
\newtheorem{rems}[definition]{Remarks}
\newcommand{\gdw}{\Longleftrightarrow}
\newcommand{\N}{{\mathbb N}}
\newcommand{\R}{{\mathbb R}}
\newcommand{\Z}{{\mathbb Z}}
\newcommand{\E}{{\mathbb E}}
\renewcommand{\P}{{\mathbb P}}
\newcommand{\cov}{\mathrm{Cov}}
\newcommand{\ind}{\mathbbmss{1}}
\newcommand{\X}{\textbf{X}}
\newcommand{\tr}{\mathrm{tr}}
\newcommand{\Y}{\textbf{Y}}
\newcommand{\T}{\textbf{T}}
\newcommand{\W}{\textbf{W}}
\newcommand{\Q}{\textbf{Q}}
\renewcommand{\L}{\textbf{L}}
\newcommand{\grab}{\bigskip \noindent}
\def\eg{e.g.\@\xspace}
\def\ie{i.e.\@\xspace}
\def\iid{i.i.d.\@\xspace}
\def\on{\operatorname}
\title[The spectral density of large covariance matrices with dependence]{On the spectral density of large sample covariance matrices with Markov dependent columns }
\author{Olga Friesen}
\address[Olga Friesen]{Westf\"alische Wilhelms-Universit\"at M\"unster,
Fachbereich Mathematik,
Einsteinstra\ss e 62, 48149 M\"unster, Germany}
\email[Olga Friesen]{olga.friesen@uni-muenster.de}
\author[Matthias L\"owe]{Matthias L\"owe}
\address[Matthias L\"owe]{Westf\"alische Wilhelms-Universit\"at M\"unster,
Fachbereich Mathematik,
Einsteinstra\ss e 62, 48149 M\"unster, Germany}
\email[Matthias L\"owe]{maloewe@math.uni-muenster.de}
\thanks{The research of the first author was supported by DFG through SFB 878 at University of M\"unster}
\date{\today}
\subjclass{60B20, 60F05}
\keywords{random matrix, sample covariance matrix, Pastur-Mar\v{c}enko law, dependent entries, Markov chains}
\begin{document}
\begin{abstract}
We investigate the spectral distribution of large sample covariance matrices with independent columns and entries in the columns that stem from Markov chains. We characterize the limiting spectral densities by their moments. Correspondingly, the proof is based on a moment method.

\end{abstract}

\maketitle

\bibliographystyle{alpha}

\section{Introduction}
Random matrix theory is one of the most active fields in modern probability theory. One of its central questions is the analysis of the spectra of random matrices. There are basically three types of those which have always been in the center of interest: matrices with independent entries (possibly up to symmetry conditions), matrices with invariance properties, \eg orthogonal or unitary invariance, and sample covariance matrices. The present paper is devoted to the study of the latter.

To be more precise, we consider an $m(n)\times n$ matrix $\X_n$ with each column being an independent copy of a stationary sequence whose joint moments satisfy appropriate conditions. We are interested in the sample covariance matrix
\begin{equation*}
\W_n = \frac 1 n \X_n \X_n^T.
\end{equation*}
If $\lambda_1,\ldots,\lambda_{m(n)}$ denote the eigenvalues of $\W_n$ (with multiplicities), we define the empirical spectral measure as
\begin{equation*}
\mu_n:= \frac{1}{m(n)} \sum_{j=1}^{m(n)} \delta_{\lambda_j},
\end{equation*}

where $\delta_{\lambda_j}$ denotes the Dirac measure supported in $\lambda_j$. One of the starting point of modern random matrix theory is the study of the asymptotics of $\mu_n$ as $n\to\infty$ and $\frac{m(n)}{n}\to y\in (0,\infty)$.

Sample covariance matrices are an important tool in statistics, in particular in multivariate statistical
inference. There, test statistics are often defined by the eigenvalues or functionals of sample covariance matrices.
The first and pioneering results for the spectra of sample covariance matrices were obtained under the assumption that all entries of $\X_n$ are
\iid (and not only the columns). Under these conditions Pastur and Mar\v{c}enko \cite{pastur_marcenko} obtained their famous Pastur-Mar\v{c}enko law for the limiting spectral distribution. Extensions of their result can be found in the work of Wachter
\cite{wachter} and Yin \cite{yin}, for a very readable survey paper we refer to \cite{bai_survey}.

For many practical purposes, the assumption that not only the columns but also the entries of the columns are \iid is rather restrictive. Various attempts have thus been made to relax this condition. Silverstein \cite{Silverstein} studied matrices of the form $\X_n= \T_n^{\frac 12}\Y_n$, where $\T_n$ is a non-negative definite matrix and $\Y_n$ consists
of \iid entries. Another attempt to relax the independence assumption was made in Yin and Krishnaiah \cite{yinkrish}. They assumed that the
columns of $\X_n$ are distributed isotropically. Also this result was extended to the case where $\X_n$ is the product of a non-negative definite matrix with an isotropically distributed matrix (see \cite{baiyinkrish}). A very general dependence structure has been considered in \cite{BaiZhou08}. There the limiting measure can be characterized via its Stieltjes transform. Yet another approach was used by Anderson and Zeitouni, who assume joint cumulant summability of the matrix entries, see \cite{zeitouni}, in particular Assumption 2.2 there.

However, all these results do not seem to cover our model in full generality. The aim of the present paper is to derive the limiting spectral distribution of matrices with independent columns, the entries of which are correlated via a Markov process and fulfill suitable conditions. Our main technique is a moment method. The corresponding combinatorial problems will be solved using a graph-theoretical result (see \cite{Kreweras72}).

The rest of the paper is organized in the following way: In the next section we describe the model we are dealing with in detail and formulate our central result. Section \ref{examples} is devoted to examples, while Section \ref{proof} contains the proofs.

\section{The Model and the Main Result}

Suppose that $\left\{a(i,j), i\in\N\right\}$, $j\in\N$, are independent and identically distributed families of real-valued stationary random variables. Define for any $m,n\in\N$ the $m\times n$ matrix $\X_n = (a(i,j))_{1\leq i\leq m, 1\leq j\leq n}$, and assume that $m$ tends to infinity proportionally to $n$, that is
\begin{equation*}
\frac{m}{n} \to y\in (0,\infty), \quad \text{as} \ n\to\infty.
\end{equation*}

Our aim is to impose appropriate conditions on the entries of $\X_n$ to obtain convergence of the expected empirical distribution of the $m\times m$ sample covariance matrix
\begin{equation*}
\W_n = \frac 1 n \X_n \X_n^T,
\end{equation*}

in a situation where the columns of $\X_n$ contain Markov processes. In order to describe such a limit, we will have to control the mixed moments of the entries. Thus, we start with the covariances, and put for any $i,i'\in\N$,
\begin{equation*}
t(i,i') := \cov(a(i,j),a(i',j)),
\end{equation*}

which does not depend on $j\in\N$. For any $m\in\N$, denote by $\T_m = (t(i,i'))_{1\leq i,i'\leq m}$ the covariance matrix of the sequence $\left\{a(i,j), 1\leq i\leq m \right\}$. Assume that for any $k\in\N$, the sequence of the $k$-th moments of the empirical spectral distribution $\nu_m$ of $\T_m$ converges, and put
\begin{equation}
H_k:= \lim_{m\to\infty} \int x^k \ \nu_m(dx) = \lim_{m\to\infty} \frac 1 m \tr\left(\T_m^k\right).
\label{mt}
\end{equation}

To state the remaining conditions, we need to introduce some notation. Hence, for $j=1,\ldots,k$, we denote by $S_{2k}^j$ the set of all permutations $\sigma$ of $\{1,\ldots,2k\}$ such that $\{\sigma(2l-1), \sigma(2l)\}\neq \{2l'-1,2l'\}$ for at least one $l\in\{1,\ldots,j\}$ and any $l'\in\{1,\ldots,k\}$. Now assume that for any $m\in\N$, there is a deterministic $m\times m$ matrix
\begin{equation*}
\T_m'=(t'(i,i'))_{1\leq i,i'\leq m} =(\alpha^{|i-i'|})_{1\leq i,i'\leq m},
\end{equation*}

with $\alpha\in[0,1)$, such that for any $k\in\N$ and $i_1,\ldots,i_{2k}\in\{1,\ldots,m\}$, $i_1\leq\ldots\leq i_{2k}$, \\

\begin{enumerate}
	\item[(A1)] we have
		\begin{align*}
				\E\left[a(i_1,1)\cdot\ldots\cdot a(i_{2k},1)\right] = \prod_{l=1}^{k} t(i_{2l-1} ,i_{2l}) + R(i_1,\ldots,i_{2k}),
		\end{align*}
		where
		\begin{equation*}
				|R(i_1,\ldots,i_{2k})| \leq c(k) \sum_{j=1}^k \sum_{\sigma\in S_{2k}^j} \prod_{l=1}^{j} t'(i_{\sigma(2l-1)} ,i_{\sigma(2l)}),
		\end{equation*}
		with $c(k)\geq 0$ depending only on $k$ and not on $m$,
	\item[]
	\item[(A2)] it holds that
		\begin{equation*}
				\left|\E\left[a(i_1,1)\cdot\ldots\cdot a(i_{2k},1)\right]\right| \leq c(k) \prod_{l=1}^{k} t'(i_{2l-1} ,i_{2l}).
		\end{equation*}
\end{enumerate}

\grab

Note that none of the conditions (A1) and (A2) implies the other. The quality of the estimates depends on the particular choice of the elements $i_1\leq\ldots\leq i_{2k}$. Moreover, (A1) and (A2) entail the existence of moments of all orders. In Section~\ref{examples}, we will present some examples of stationary processes that satisfy the conditions above. In particular, such processes necessarily have exponentially decaying covariances which is the case for many Markov processes. The definition of the sets $S_{2k}^j$ and the estimate in (A2) are basically motivated by the representation of the joint moments of a Gaussian process. Here, Isserlis' theorem \cite{Isserlis} states that
\begin{equation*}
	\E\left[a(i_1,1)\cdot\ldots\cdot a(i_{2k},1)\right] = \prod_{l=1}^{k} t(i_{2l-1} ,i_{2l}) + \sum_{\sigma\in S_{2k}^k} \prod_{l=1}^{k} t(i_{\sigma(2l-1)} ,i_{\sigma(2l)}).
\end{equation*}

It will become obvious that in our situation, condition (A2) is the most restrictive since it basically demands the underlying distribution to be symmetric. Our main result is

\begin{theorem}
Assume that \eqref{mt}, (A1), and (A2) hold. Then, the expected empirical spectral distribution of $\W_n$ converges weakly to a probability measure with $k$-th moment equal to
\begin{equation}
\sum_{s=1}^{k} \ y^{k-s} \ \frac{k!}{s!} \ \sum_{\substack{i_1+\ldots + i_s = k-s+1, \\ i_1 +2i_2+\ldots + si_s=k}} \  \prod_{l=1}^{s} \frac{H_{l}^{i_l}}{i_l!}.
\label{lsdmom}
\end{equation}
\label{main}
\end{theorem}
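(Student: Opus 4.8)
The plan is to use the moment method: show that the expected moments $\E\left[\frac{1}{m}\tr\left(\W_n^k\right)\right]$ converge to the quantity in \eqref{lsdmom}, and then argue that these limiting moments determine a unique probability measure (using a Carleman-type growth bound, which follows from the structure of the formula together with $\alpha<1$ and the convergence \eqref{mt}). The bulk of the work is the moment computation. Expanding the trace, we have
\begin{equation*}
\E\left[\frac{1}{m}\tr\left(\W_n^k\right)\right] = \frac{1}{m n^k} \sum \E\left[a(i_1,j_1)a(i_2,j_1)a(i_2,j_2)a(i_3,j_2)\cdots a(i_k,j_k)a(i_1,j_k)\right],
\end{equation*}
where the sum runs over $i_1,\ldots,i_k\in\{1,\ldots,m\}$ and $j_1,\ldots,j_k\in\{1,\ldots,n\}$. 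Because the columns are independent, the expectation factorizes over the distinct values among $j_1,\ldots,j_k$, and within each column we are left with a joint moment of the stationary sequence, to which we apply (A1) and (A2).

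First I would set up the standard bookkeeping by associating to each index tuple a bipartite multigraph (or a closed path alternating between ``row'' vertices $i_1,\ldots,i_k$ and ``column'' vertices $j_1,\ldots,j_k$) and identify, as in the classical Mar\v{c}enko--Pastur proof, which graph shapes contribute in the limit. The factor $\frac{1}{mn^k}$ and the counting of index tuples with a given partition structure forces only certain topologies to survive: each column index should be visited so that its associated entries pair up, and the row-index structure should be a tree-like (non-crossing) configuration. The key step, and the place where the paper's conditions do real work, is to show that the remainder term $R$ in (A1) and the ``unpaired'' contributions controlled by (A2) are negligible after dividing by $mn^k$; here the estimates in terms of $\T_m' = (\alpha^{|i-i'|})$ are crucial, since $\sum_{i,i'} \alpha^{|i-i'|} = O(m)$ rather than $O(m^2)$, which yields the necessary saving in powers of $n$. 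This is where I expect the main technical obstacle to lie: carefully classifying the permutations in $S_{2k}^j$ appearing in the bound on $R$, matching them to graph structures, and verifying in each case that the number of free summation indices is strictly too small to compensate the normalization.

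Once the negligible terms are discarded, the surviving contribution is, for each column, a product of covariances $\prod_l t(i_{2l-1},i_{2l})$ coming from the leading term of (A1). Summing these over the row indices in the allowed tree configurations produces traces of powers of $\T_m$, and after normalization and passage to the limit these become products of the $H_l$. The combinatorial identity that the resulting sum over admissible configurations equals \eqref{lsdmom} is exactly the point where the graph-theoretic result of Kreweras \cite{Kreweras72} enters: it counts the relevant non-crossing-type structures refined by how many row vertices appear and how the column multiplicities distribute, giving the multinomial weight $\frac{k!}{s!}\prod_l \frac{1}{i_l!}$, the power $y^{k-s}$ from the ratio $m/n$, and the monomials $\prod_l H_l^{i_l}$. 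Finally, I would check the Carleman condition: the $k$-th moment in \eqref{lsdmom} grows no faster than $C^k$ times a bound on $H_l$ (which, since $\alpha<1$ and $\T_m$ has entries bounded via $\T_m'$, is itself at most geometric in $l$), so the moment problem is determinate and weak convergence of $\E\mu_n$ to the claimed measure follows.
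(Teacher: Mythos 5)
Your plan is the same as the paper's: expand the trace, factor over the partition of the column indices, use (A1)/(A2) with the exponential bound $t'(i,i')=\alpha^{|i-i'|}$ to kill subleading terms, recognize traces of $\T_m^l$ in the survivors, and count the surviving configurations via Kreweras. But as written it is an outline, not a proof: the two places you explicitly defer are precisely where all the work lies, and nothing in your sketch resolves them. First, the claim that ``the row-index structure should be a tree-like (non-crossing) configuration'' is asserted, not proved. The paper makes this precise by associating to each row-index tuple a $\pi$-consistent multigraph whose components are cycles, proving that such a graph has at most $k-\#\pi+1$ components (Lemma~\ref{maximum}), and then proving the crucial rigidity statement (Lemma~\ref{ncr}): the maximum is attained only when $\pi$ is non-crossing, in which case the maximizing graph is \emph{unique} and the partition induced by its components is the Kreweras pre-image of $\pi$. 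Without this last point you cannot get the formula \eqref{lsdmom}: the bijection $\eta\mapsto K(\eta)$ with $\#K(\eta)=k-\#\eta+1$ is exactly what converts the sum over non-crossing $\pi$ weighted by the component sizes $\#G_l(\pi)$ into a sum weighted by block sizes, which is the form to which Kreweras' count of $\mathrm{NC}(k;i_1,\ldots,i_s)$ applies. Your sketch never establishes which non-crossing structure is being counted or why it is counted exactly once.

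Second, you overlook that (A1) and (A2) are stated only for \emph{sorted} indices $i_1\le\cdots\le i_{2k}$. This forces an extra layer of bookkeeping absent from your outline: within each closed block one must stratify the row-index sum by the relative order of the $i_l$ (the sets $\mathcal{T}_m(\pi,G)$ in the paper), apply (A1) on each stratum, and then show at the end (the paper's Lemma~\ref{fin}, using the multiplicativity $\alpha^{a}\alpha^{b}=\alpha^{a'}\alpha^{b'}$ under edge swaps) that extending the sum back to all of $\{1,\ldots,m\}^k$ costs only $\mathcal{O}(m^{k-\#\pi})$, so that honest traces $\tr(\T_m^{\#G_l})$ appear. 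Likewise, your treatment of the remainder $R$ bounded via $S_{2k}^j$ is flagged as ``the main technical obstacle'' but not carried out; the paper's argument is that each such permutation creates at least one edge not present in the original graph, merging two components and hence losing a factor of $m$. These are not routine verifications one can wave at --- they are the content of the proof --- so as it stands the proposal has a genuine gap at its combinatorial core, even though the strategy and all the ingredients you name are the correct ones.
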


\begin{rem}
Let $\textbf{Y}_n$ be an $m\times n$ random matrix with \iid entries. In the proof of Theorem 4.1 in \cite{BaiSilv}, we see that the limiting distribution in Theorem~\ref{main} coincides with that of $\frac{1}{n} \textbf{Y}_n \textbf{Y}^T_n \T_m$. This is not surprising, at least when considering Gaussian entries. Indeed, if $\T_m$ is a positive definite matrix with Cholesky decomposition $\T_m=\textbf{L}_m \textbf{L}_m^T$, then $\frac{1}{n} \textbf{Y}_n \textbf{Y}^T_n \T_m$ has the same limiting distribution as $\frac{1}{n} \textbf{L}_m \textbf{Y}_n \textbf{Y}^T_n \textbf{L}_m^T$. Assuming that the entries of $\textbf{Y}_n$ are Gaussian, we see that the matrix $\textbf{L}_m \textbf{Y}_n$ has an independent copy of the same Gaussian Markov process in each column.
\end{rem}

\begin{rem}
The moments in Theorem~\ref{main} satisfy the Carleman condition (cf. \cite{BaiSilv}, Lemma 4.9). Hence, they determine the limiting probability distribution uniquely.
\end{rem}

\begin{rem}[Random Fields]
A natural extension of Theorem \ref{main} is to consider the limiting spectral distribution of matrices
\begin{equation*}
\frac{1}{n} \X_n \Q_n \X_n^T,
\end{equation*}

where $\Q_n:=(q(i,j))_{1\leq i,j\leq n}$ is a deterministic symmetric $n\times n$ matrix. If $\Q_n$ is positive definite, and $\Q_n=\L_n \L_n^T$ for a lower triangular matrix $\L_n$, then the matrix $\Y_n:= \X_n \L_n$ forms a Markov random field on a lattice. Now assume that the conditions of Theorem \ref{main} hold, and for any $k\in\N$, $i_1,\ldots,i_{2k-1}\in\N$, we have $\E[a(i_1,j)\cdot\ldots\cdot a(i_{2k-1},j)] = 0$. Further, put $\Q_n':=(|q(i,j)|)_{1\leq i,j\leq n}$, and suppose that the limit $\lim_{n\to\infty} \frac{1}{n} \tr((\Q_n')^k)$ exists for any $k\in\N$. Define
\begin{equation*}
\tilde{H}_k:= \lim_{n\to\infty} \frac 1 n \tr\left(\Q_n^k\right).
\end{equation*}

One can apply similar techniques as in the proof of Theorem \ref{main} to show that the $k$-th moment of the expected empirical spectral distribution of $\frac{1}{n} \X_n \Q_n \X_n^T$ converges to
\begin{equation*}
\sum_{s=1}^k y^{k-s} k (k-s)! (s-1)! \sum_{\substack{i_1+\ldots + i_s = k-s+1, \\ i_1 +2i_2+\ldots + si_s=k}} \  \prod_{l=1}^{s} \frac{H_{l}^{i_l}}{i_l!} \sum_{\substack{j_1+\ldots + j_{k-s+1} = s, \\ j_1 +2j_2+\ldots + (k-s+1)j_{k-s+1}=k}} \prod_{l=1}^{k-s+1} \frac{\tilde{H}_{l}^{j_l}}{j_l!}.
\end{equation*}

The proof will be given in a separate article.
\end{rem}

\section{Examples}
\label{examples}

\subsection{Independent Random Variables}

Suppose that the families $\left\{a(i,j), i\in\N\right\}$, $j\in\N$, consist of \iid random variables with zero mean and variance $\sigma^2>0$. Moreover, for any $k\in\N$, we assume that $m(2k):=\E[a(i,j)^{2k}]<\infty$, and $\E[a(i,j)^{2k-1}]=0$. In this case, we have $\T_m=\sigma^2 \textbf{I}_m$, implying that
\begin{equation*}
	\frac 1 m \tr\left(\T_m^k\right) = \sigma^{2k}.
\end{equation*}
Consequently, \eqref{mt} holds with $H_k=\sigma^{2k}$. Now choose $\T_m'=\T_m$, and fix $i_1\leq \ldots \leq i_{2k}$. If the value of some $i_l$ occurs only once, then
\begin{equation*}
	\E\left[a(i_1,1)\cdot\ldots\cdot a(i_{2k},1)\right] = 0 = \prod_{l=1}^{k} t(i_{2l-1} ,i_{2l}).
\end{equation*}

If $i_1=i_2<i_3=i_4<\ldots<i_{2k-1}=i_{2k}$, we have that
\begin{equation*}
	\E\left[a(i_1,1)\cdot\ldots\cdot a(i_{2k},1)\right] = \sigma^{2k} = \prod_{l=1}^{k} t(i_{2l-1} ,i_{2l}),
\end{equation*}
implying (A1) in this situation. In any other case, we can conclude that the sum $\sum_{\sigma\in S_{2k}^j} \prod_{l=1}^{j} t'(i_{\sigma(2l-1)} ,i_{\sigma(2l)})$ has at least one constant term. Thus, (A1) holds since the boundedness of the moments and the H\"older inequality yield
\begin{equation*}
	\left|\E\left[a(i_1,j)\cdot\ldots\cdot a(i_{2k},j)\right]\right|\leq m(2k).
\end{equation*}

Since the elements $i_1,\ldots,i_{2k}$ are sorted, we know that the identity $\prod_{l=1}^{k} t(i_{2l-1} ,i_{2l})=0$ entails the existence of an odd number $j$ such that $i_l=\ldots=i_{l+j-1}$ for some $l$, and $i_{l'}\neq i_l$ for any $l'\notin \{l,\ldots,l+j-1\}$. In this case, there is an odd moment that occurs in the product $\E\left[a(i_1,1)\cdot\ldots\cdot a(i_{2k},1)\right]$, implying that this expectation is also equal to zero. If $\prod_{l=1}^{k} t(i_{2l-1} ,i_{2l})\neq 0$, then $\prod_{l=1}^{k} t(i_{2l-1} ,i_{2l})=\sigma^{2k}$, and we can put $c(k):= m(2k)/\sigma^{2k}$ to obtain condition (A2). By Theorem~\ref{main}, we thus have
\begin{align*}
\lim_{n\to\infty} \frac 1 m \E\left[\tr\left(\W_n^k\right)\right]
= \sigma^{2k} \sum_{s=1}^{k} \ y^{k-s} \ \frac{k!}{s!} \ \sum_{\substack{i_1+\ldots + i_s = k-s+1, \\ i_1 +2i_2+\ldots + si_s=k}} \  \prod_{l=1}^{s} \frac{1}{i_l!}.
\end{align*}

Denoting by $\mathrm{NC}^{(i)}(k)$ the set of all non-crossing partitions of $\{1,\ldots,k\}$ with $i$ blocks, Lemma~\ref{noncr} yields
\begin{equation*}
\frac{k!}{s!} \ \sum_{\substack{i_1+\ldots + i_s = k-s+1, \\ i_1 +2i_2+\ldots + si_s=k}} \ \prod_{l=1}^{s} \frac{1}{i_l!} = \# \mathrm{NC}^{(k-s+1)}(k),
\end{equation*}

implying
\begin{align}
\lim_{n\to\infty} \frac 1 m \E\left[\tr\left(\W_n^k\right)\right] = \sigma^{2k} \sum_{i=0}^{k-1} \ y^i \ \# \mathrm{NC}^{(i+1)}(k).
\label{pastur}
\end{align}

It was proven in \cite{Kreweras72}, Corollary 4.1, that
\begin{equation*}
\# \mathrm{NC}^{(i+1)}(k) = \frac{1}{k} \binom{k}{i} \binom{k}{i+1} = \frac{1}{i+1} \binom{k}{i} \binom{k-1}{i}.
\end{equation*}

Substituting this identity in \eqref{pastur}, we exactly obtain the moments of the Mar\v{c}enko-Pastur distribution with parameter $y\in (0,\infty)$ (\cite{BaiSilv}, Lemma 3.1).

\subsection{Stationary Processes on a Finite State Space}

Let $\left\{a(i), i\in\N\right\}$ be a stationary process on a finite state space $S=\{s_1,\ldots,s_N\}$, $N\geq 2$. Denote by $\pi=(\pi_1,\ldots,\pi_N)$ the stationary distribution. Further, suppose that $\E[a(1)]=0$ and, for any $k,l\in\N$, $l\leq k$, $i_1\leq \ldots \leq i_k$,
\begin{equation}
\begin{split}
& \max_{j_1,\ldots,j_k\in\{1,\ldots,N\}} \Big|\P\left(a(i_k)=s_{j_k},\ldots,a(i_l)=s_{j_l} \ | \ a(i_{l-1})=s_{j_{l-1}},\ldots,a(i_1)=s_{j_1}\right) \Big. \\
& \hspace{5.2cm} \Big. - \P\left(a(i_k)=s_{j_k},\ldots,a(i_l)=s_{j_l}\right) \Big|
\leq C \alpha^{i_l-i_{l-1}}.
\end{split}
\label{rate}
\end{equation}

Before continuing, let us consider some concrete examples. On the one hand, aperiodic and irreducible Markov chains satisfy this inequality. On the other hand, we can also consider different Gibbs measures. To be more precise, assume that the joint distribution of the process $\left\{a(i), i\in\Z\right\}$ is a Gibbs measure for some shift-invariant potential $\Phi=\{\phi_A: A\subset \Z, 0<|A|<\infty\}$. Due to \cite{Georgii}, Chapter 8, the estimate in \eqref{rate} holds if
\begin{equation}
\sum_{A: 0\in A} e^{t \mathrm{diam}(A)} (|A|-1) \sup_{\zeta,\eta} |\phi_A(\zeta)-\phi_A(\eta)| < \infty,
\label{expcond}
\end{equation}

for some $t>0$, and Dobrushin's condition is satisfied. Since we consider a shift-invariant potential $\Phi$, the latter is true if
\begin{equation}
\sum_{A: 0\in A} (|A|-1) \sup_{\zeta,\eta} |\phi_A(\zeta)-\phi_A(\eta)| < 2.
\label{dobrushin}
\end{equation}

If we take, for example, a potential with finite range, then \eqref{expcond} is satisfied. If we consider a Gibbs measure with a parameter $\beta>0$, \ie we substitute $\Phi$ by $\Phi_\beta:=\{\beta\phi_A: A\subset \Z, 0<|A|<\infty\}$, then \eqref{dobrushin} holds whenever $\beta$ is small enough. \\

To verify condition (A1), the assumptions made are sufficient. This is also the case for the prove of \eqref{mt}. However, in order to obtain (A2), we have to assume that for any $(i_1,\ldots,i_k)\in\N^k$,
\begin{equation*}
\E[a(i_1)\cdot\ldots\cdot a(i_k)] = 0,
\end{equation*}

whenever $k\in\N$ is odd. In particular, we have that $a(1)$ and $-a(1)$ are equally distributed, implying that $\pi$ is symmetric. \\

Note that for any $k\in\N$, $l\leq k$, $i_1\leq \ldots\leq i_{l-1} \leq i_l<\ldots<i_k$, and $d_l,\ldots,d_k\in\N$, it holds that
\begin{align*}
&\E[a(i_k)^{d_k}\cdot\ldots\cdot a(i_l)^{d_l} \ | \ a(i_{l-1}),\ldots,a(i_1)] \\
& \hspace{1.5cm} = \sum_{j_1,\ldots,j_k=1}^N s_{j_k}^{d_k}\cdot\ldots\cdot s_{j_l}^{d_l} \ \ind_{\{ a(i_{l-1})=s_{j_{l-1}},\ldots,a(i_1)=s_{j_1} \}} \\
& \hspace{3.5cm} \cdot \P\left(a(i_k)=s_{j_k},\ldots,a(i_l)=s_{j_l} \ | \ a(i_{l-1})=s_{j_{l-1}},\ldots,a(i_1)=s_{j_1}\right) .
\end{align*}

Consequently, \eqref{rate} implies
\begin{equation}
\left| \E[a(i_k)^{d_k}\cdot\ldots\cdot a(i_l)^{d_l} \ | \ a(i_{l-1}),\ldots,a(i_1)] - \E[a(i_k)^{d_k}\cdot\ldots\cdot a(i_l)^{d_l}] \right| \leq C \alpha^{i_l-i_{l-1}}.
\label{rate2}
\end{equation}

We want to start with verifying the convergence of the empirical spectral distribution of $\T_m$. Since the considered process is stationary, we can define $R(j)=t(1,j+1)=\E[a(1)a(j+1)]$, $j\in\N_0$, implying that $\T_m = (R(|j-j'|))_{1\leq j,j'\leq m}$. Note that in particular, we have for any $j\in\N_0$,
\begin{equation*}
|R(j)| = \left|\E\left[a(1) \left( \E[a(j+1)|a(1)]-\E[a(j+1)]\right) \right] \right| \leq C \alpha^{j}.
\end{equation*}

Consequently, the covariances are summable, which entails the existence of the spectral density $f:[0,1]\to \R$,
\begin{equation*}
f(x) = \sum_{j\in\Z} e^{2\pi ijx} R(j),
\end{equation*}

where $R(-j):=R(j)$ for any $j\geq 1$. In this case, Szeg\"o's limit theorem yields that the moments of the empirical spectral distribution $\nu_m$ of $\T_m$ converge. To be more precise, we obtain for any $k\in\N$,
\begin{equation*}
\lim_{m\to\infty} \int_{\R} x^k \ \nu_m(dx) = \int_0^1 f(x)^k \ dx.
\end{equation*}

Thus, \eqref{mt} holds. Now put $p=\sqrt[k]{\alpha}$, and $t'(i,j)=p^{|i-j|}$, $1\leq i,j\leq m$. Let $i_1\leq \ldots \leq i_{2k}$. To obtain assumption (A1), it suffices to verify that
\begin{equation}
\E[a(i_1)\cdot\ldots\cdot a(i_{2k})] = \prod_{l=1}^{k} t(i_{2l-1},i_{2l}) + R(i_1,\ldots,i_{2k}),
\label{goal}
\end{equation}

where
\begin{equation}
|R(i_1,\ldots,i_{2k})| \leq C \sum_{l=1}^{k-1} \alpha^{i_{2l+1}-i_{2l}}.
\label{goal2}
\end{equation}

To achieve this aim, first consider $k=1$. In this case, the identity in \eqref{goal} holds with $R(i_1,\ldots,i_{2k})=0$. Now take any arbitrary $k\in\N$ and assume that our statement holds for any $k'<k$. We can apply \eqref{rate2} to obtain
\begin{align}
\begin{split}
& \E[a(i_1)\cdot\ldots\cdot a(i_{2k})] \\
& \quad = \E\Big[a(i_1) a(i_2) \Big(\E[a(i_3)\cdot\ldots\cdot a(i_{2k}) \ | \ a(i_1),a(i_2)] - \E[a(i_3)\cdot\ldots\cdot a(i_{2k})]\Big)\Big] \\
& \qquad + t(i_1,i_2) \ \E[a(i_3)\cdot\ldots\cdot a(i_{2k})] \\
& \quad = \prod_{l=1}^k t(i_{2l-1},i_{2l}) + R(i_1,\ldots,i_{2k}),
\end{split}
\label{cov}
\end{align}

with
\begin{align*}
|R(i_1,\ldots,i_{2k})| \leq C \alpha^{i_3-i_2} + C \sum_{l=2}^{k-1} \alpha^{i_{2l+1}-i_{2l}} = C \sum_{l=1}^{k-1} \alpha^{i_{2l+1}-i_{2l}}.
\end{align*}

Hence, condition (A1) holds. To prove (A2), note that the fact that all odd mixed moments vanish implies that for $l\leq k$, we have
\begin{align*}
& \E[a(i_1)\cdot\ldots\cdot a(i_{2k})] \\
& \hspace{1cm} = \E\Big[a(i_1)\cdot\ldots\cdot a(i_{2l-1}) \Big. \\
& \hspace{2cm} \cdot \Big. \Big(\E[a(i_{2l})\cdot\ldots\cdot a(i_{2k}) \ | \ a(i_1),\ldots, a(i_{2l-1})] - \E[a(i_{2l})\cdot\ldots\cdot a(i_{2k})]\Big)\Big].
\end{align*}

Again, the estimate in \eqref{rate2} ensures that
\begin{equation*}
|\E[a(i_1)\cdot\ldots\cdot a(i_{2k})]| \leq C \alpha^{i_{2l}-i_{2l-1}}.
\end{equation*}

Since this relation holds for any $l=1,\ldots,k$, we can conclude
\begin{equation*}
|\E[a(i_1)\cdot\ldots\cdot a(i_{2k})]| \leq C \alpha^{\max\{i_{2l}-i_{2l-1} : l=1,\ldots,k \}}
\leq C \prod_{l=1}^k p^{i_{2l}-i_{2l-1}}.
\end{equation*}

\subsection{Gaussian Processes}

Assume that for any $j\in\N$, the stationary process $\left\{a(i,j), i\in\N\right\}$ is Gaussian with zero mean. By Isserlis' theorem, assumption (A1) holds. Moreover, if $\left\{a(i,j), i\in\N\right\}$ is additionally a non-degenerated Markov process, we can conclude that $t(i,i') = p^{|i-i'|}$ for some $-1 < p < 1$ and any $i,i'\in\N$. In particular, (A2) is satisfied. The convergence of the empirical spectral distribution of $\T_m$ has been verified in the previous example.

\section{Proof of Theorem \ref{main}}
\label{proof}

The idea of the proof is to use the method of moments. Thus, we want to show that for any $k\in\N$, the $k$-th moment of the expected empirical spectral distribution $\bar{\mu}_n$ of $\W_n$ converges to \eqref{lsdmom} as $n\to\infty$. Our starting point is the identity
\begin{align*}
& \int x^k \ \bar{\mu}_n(dx) = \frac 1 m \E\left[\tr\left(\W_n^k\right)\right] \\
& \quad = \frac {1} {m n^k} \sum_{i_1,\ldots,i_k=1}^m \sum_{j_1,\ldots,j_k = 1}^n \E\left[a(i_1,j_1) a(i_2,j_1) a(i_2,j_2) a(i_3,j_2) \cdot \ldots \cdot a(i_k,j_k) a(i_1,j_k)\right].
\end{align*}

\subsection{The independence of the columns}

Let $k\in \N$ and denote by $\mathcal{P}(k)$ the set of all partitions of $\left\{1,\ldots,k\right\}$. We say that two elements $l,l'\in \left\{1,\ldots,k\right\}$ are \textit{equivalent} with respect to a partition $\pi \in \mathcal{P}(k)$, and write $l\sim_\pi l'$, if $l$ and $l'$ are in the same block of $\pi$. Further, for any fixed $\pi \in \mathcal{P}(k)$, define $S_n(\pi)$ to be the set of all $k$-tuples $(j_1,\ldots,j_k) \in \left\{1,\ldots,n\right\}^k$ such that
\begin{equation*}
j_l = j_{l'} \qquad \gdw \qquad l\sim_\pi l'.
\end{equation*}

In particular, we have that for $(j_1,\ldots,j_k) \in S_n(\pi)$, the entries $a(i_1,j_{l_1}),\ldots,a(i_p,j_{l_p})$ are independent whenever $l_1,\ldots,l_p$ belong to different blocks of $\pi$. Thus, we obtain
\begin{equation*}
\begin{split}
\frac 1 m \E\left[\tr\left(\W_n^k\right)\right]
& = \frac {1} {m n^k} \sum_{\pi\in\mathcal{P}(k)} \sum_{(j_1,\ldots,j_k)\in S_n(\pi)} \sum_{i_1,\ldots,i_k=1}^m \E\bigg[\prod_{l=1}^k a(i_l,j_l) a(i_{l+1},j_l)\bigg] \\
& = \frac {1} {m n^k} \sum_{\pi\in\mathcal{P}(k)} \sum_{(j_1,\ldots,j_k)\in S_n(\pi)} \sum_{i_1,\ldots,i_k=1}^m
\prod_{s=1}^{\#\pi}  \E\bigg[\prod_{l\in B_\pi^{(s)}} a(i_l,j_l) a(i_{l+1},j_l)\bigg],
\end{split}
\end{equation*}

where we cyclically identify $k+1$ with $1$, and write $\#\pi$ for the number of blocks of $\pi$ denoted by $B_\pi^{(1)},\ldots,B_\pi^{(\#\pi)}$. Since we assumed the columns of $\X_n$ to be identically distributed, we conclude
\begin{equation*}
\frac 1 m \E\left[\tr\left(\W_n^k\right)\right] = \frac {1} {m n^k} \sum_{\pi\in\mathcal{P}(k)} \ \# S_n(\pi) \ \sum_{i_1,\ldots,i_k=1}^m
\prod_{s=1}^{\#\pi}  \E\bigg[\prod_{l\in B_\pi^{(s)}} a(i_l,1) a(i_{l+1},1)\bigg].
\end{equation*}

To fix some element in $S_n(\pi)$, we have to choose for each block of $\pi$ one value in $\{1,\ldots,n\}$. By definition, those values are supposed to be distinct, implying $\# S_n(\pi) = n (n-1) \cdot\ldots\cdot (n-\#\pi+1)$. We thus arrive at
\begin{equation}
\lim_{n\to\infty} \frac 1 m \E\left[\tr\left(\W_n^k\right)\right]
= \lim_{n\to\infty} \sum_{\pi\in\mathcal{P}(k)} \frac {1} {m n^{k-\#\pi}} \sum_{i_1,\ldots,i_k=1}^m
\prod_{s=1}^{\#\pi}  \E\bigg[\prod_{l\in B_\pi^{(s)}} a(i_l,1) a(i_{l+1},1)\bigg],
\label{moments}
\end{equation}

if the limits exist.

\subsection{Consistent graphs}

Our final aim is to use assumption (A1) in order to deduce a representation of the expectations in \eqref{moments} with products of the form $t(l_1,l_2) t(l_2,l_3)\cdot\ldots\cdot t(l_{d-1},l_d) t(l_d,l_1)$. This would lead to the occurrence of traces $\tr \left(\T_m^d\right)$. In this case, relation \eqref{mt} could be used to compute the limit in \eqref{moments}. Toward this end, we start with some definitions. Thus, fix $\pi \in \mathcal{P}(k)$ and denote for any $s=1,\ldots,\#\pi$ the \emph{closed blocks} of $\pi$ by
\begin{equation*}
\overline{B}_\pi^{(s)} := B_\pi^{(s)} \cup \left\{l\in \{1,\ldots,k\}: l-1\in B_\pi^{(s)}\right\},
\end{equation*}

where $0$ is identified with $k$. Moreover, define the function $m_\pi: \left\{1,\ldots,k\right\} \to \left\{1,2\right\}$ by
\begin{equation*}
m_\pi(l) = \begin{cases} 1, \qquad \text{if} \ l \ \not\sim_\pi \ l-1, \\ 2, \qquad \text{if} \ l \ \sim_\pi \ l-1. \end{cases}
\end{equation*}

If $l\in \overline{B}_\pi^{(s)}$ for some $s=1,\ldots,\#\pi$, then $m_\pi(l)$ is the \emph{multiplicity} of $l$ in $\overline{B}_\pi^{(s)}$. This definition allows us to write

\begin{equation}
\E\bigg[\prod_{l\in B_\pi^{(s)}} a(i_l,1) a(i_{l+1},1)\bigg] = \E\bigg[\prod_{l\in \overline{B}_\pi^{(s)}} a(i_l,1)^{m_\pi(l)}\bigg].
\label{expect}
\end{equation}

In order to employ the representation of the mixed moments in (A1), it will be necessary to sort the elements $i_l$, $l\in\overline{B}_\pi^{(s)}$, for any $s=1,\ldots,\#\pi$. The idea is to introduce certain graphs on the circle $\{1,\ldots,k\}$ such that an edge between two vertices $v$ and $w$ indicates that both are elements of the same set $\overline{B}_\pi^{(s)}$, and $i_v$ and $i_w$ are neighbors concerning their size. To make this description more precise, we begin with

\begin{definition}[Consistent Graphs]

Let $G$ be an undirected multigraph with vertex set $V(G) = \left\{1,\ldots,k\right\}$ and edge set $E(G) = \left\{e_1,\ldots,e_k\right\}$, and let $f_G: E \to V \cup [V]^2$ be a function assigning to each edge either one or two vertices called the \emph{ends} of the edge. We say that $G$ is \emph{consistent} with $\pi$ if it is possible to decompose $G$ into subgraphs $G_\pi^{(s)}$, $s=1,\ldots,\#\pi$, with vertex sets $\overline{B}_\pi^{(s)}$ and edge sets $E_\pi^{(s)}(G)$, such that \\

\begin{enumerate}
	\item[(C1)] the degree of a vertex $l$ in the subgraph $G_\pi^{(s)}$ equals $m_\pi(l)$,
	\item[(C2)] $E(G)$ is the disjoint union of the sets $E_\pi^{(s)}(G),\ s=1,\ldots,\#\pi$.
\end{enumerate}
\label{def}
\end{definition}

In other words, a $\pi$-consistent graph $G$ can be constructed by connecting the elements of any set $ \overline{B}_\pi^{(s)}$ according to their multiplicities (Figure \ref{graphs}).

\begin{figure}[ht]
  \centering
  \begin{minipage}[b]{4.9 cm}
    \includegraphics[trim = 0cm 5cm 0cm 0cm, width=49mm]{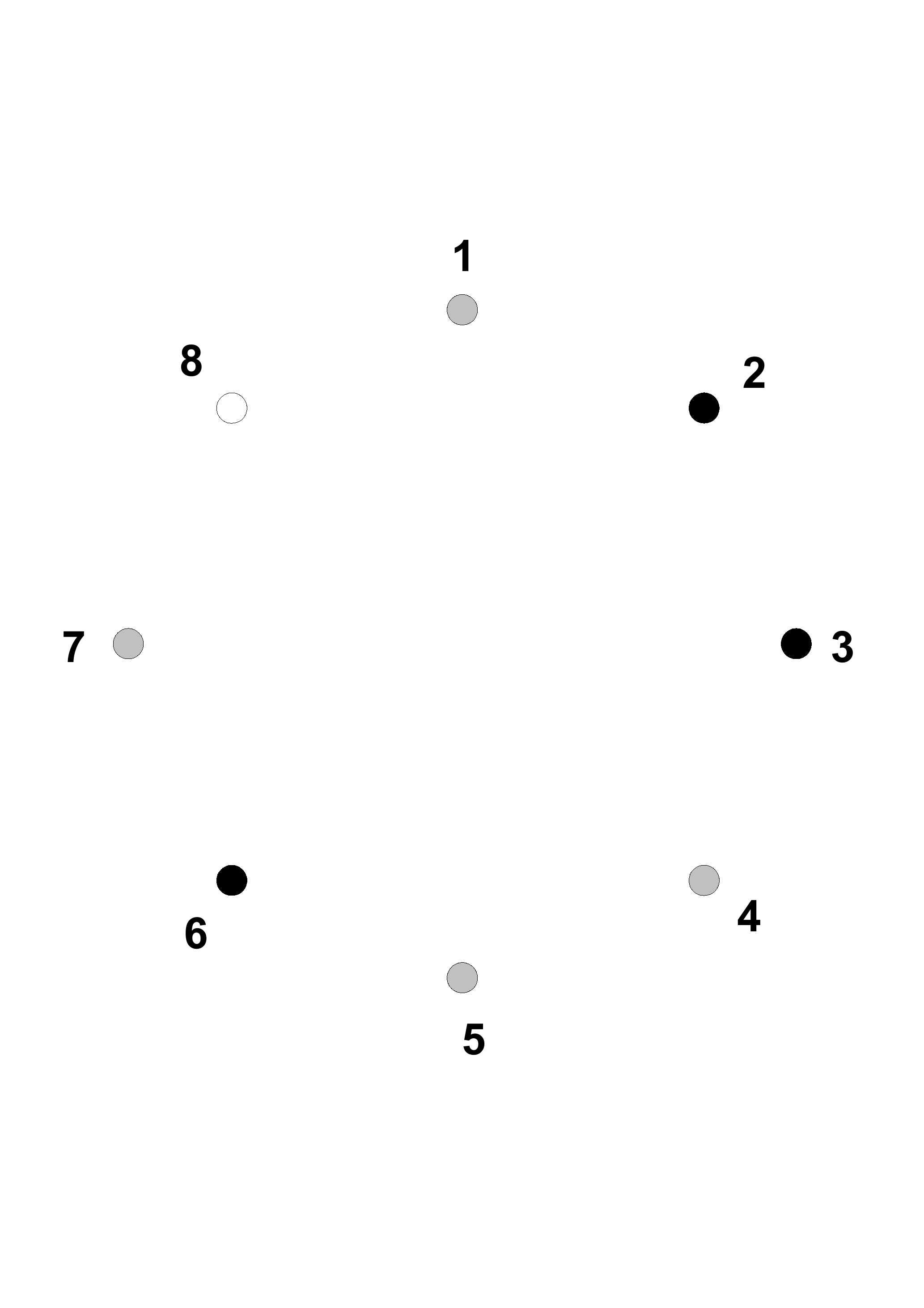}
  \end{minipage}
  \begin{minipage}[b]{4.9 cm}
    \includegraphics[trim = 0cm 5cm 0cm 0cm, width=49mm]{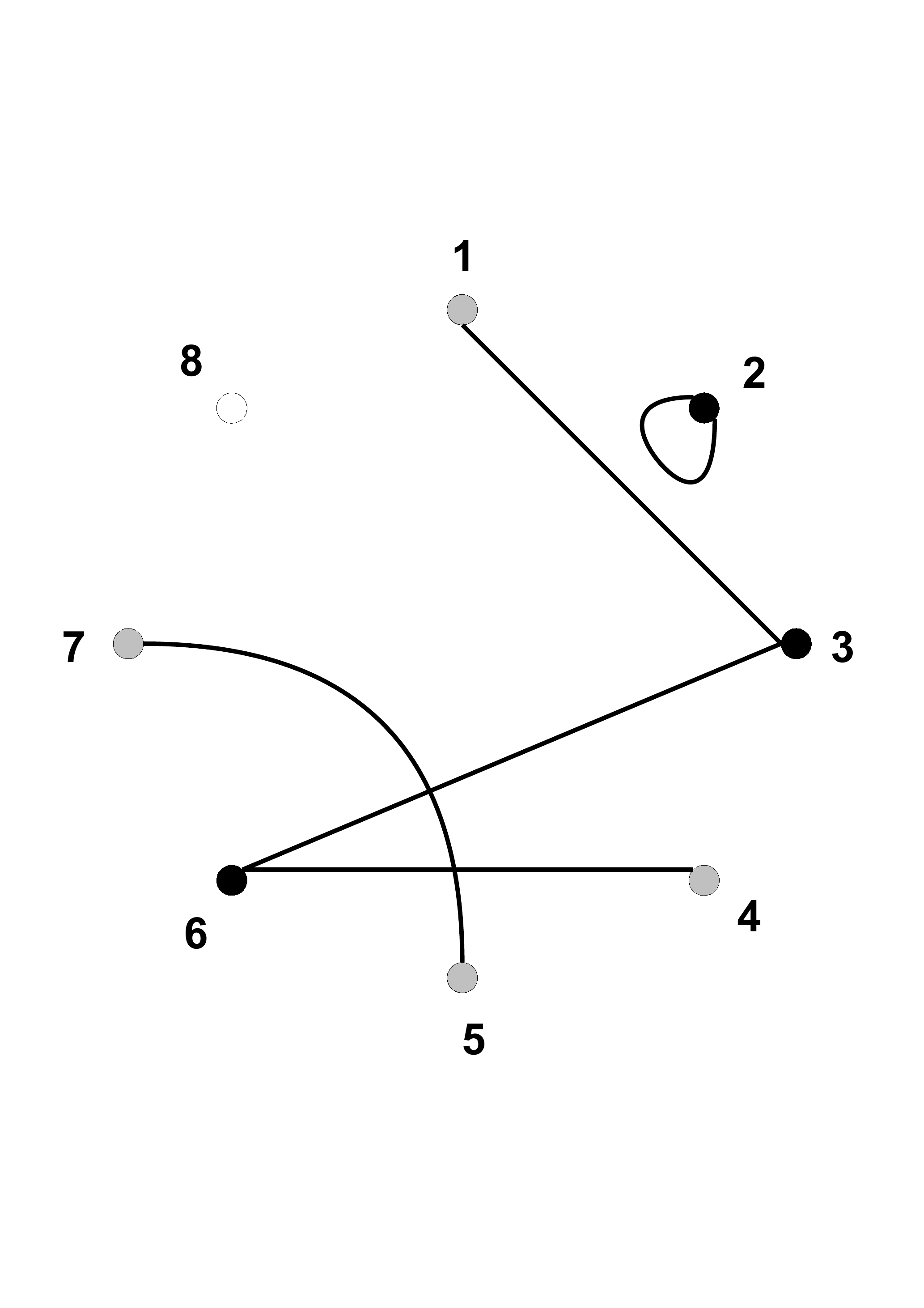}
  \end{minipage}
  \begin{minipage}[b]{4.9 cm}
    \includegraphics[trim = 0cm 5cm 0cm 0cm, width=49mm]{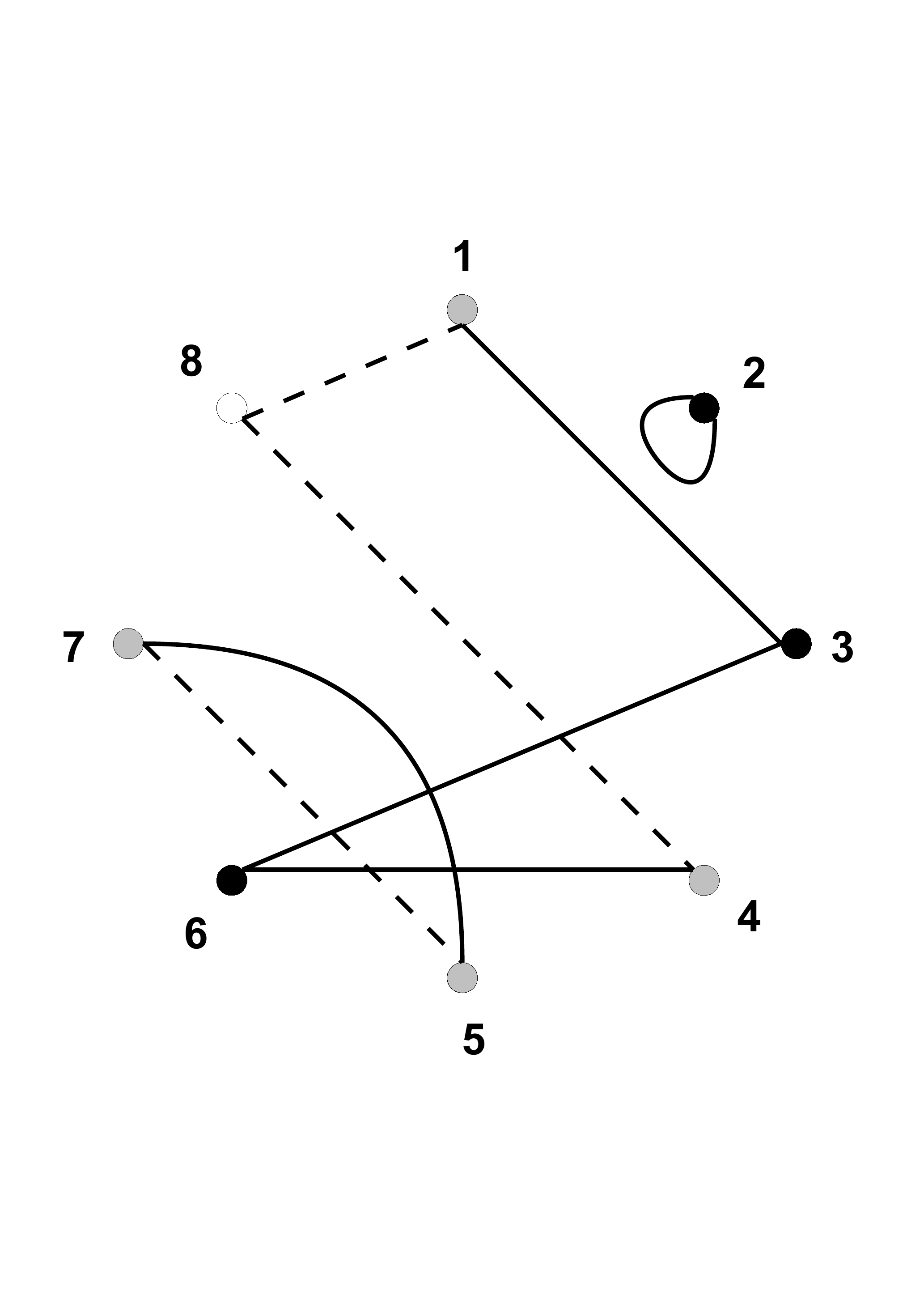}
  \end{minipage}
  \caption{Here, we take $\pi\in\mathcal{P}(8)$ with $B_\pi^{(1)} = \{1,2,3,5,6\}$ and $B_\pi^{(2)} = \{4,7,8\}$, implying that $\overline{B}_\pi^{(1)} = \{1,2,3,4,5,6,7\}$ and $\overline{B}_\pi^{(2)} = \{1,4,5,7,8\}$. In the first picture, we see those vertices colored black which are only contained in $\overline{B}_\pi^{(1)}$, and those colored white which are only in $\overline{B}_\pi^{(2)}$. If some element is in the intersection of the two sets, we color it grey. For any black or white vertex $l$, we have $m_\pi (l)=2$. If $l$ is grey, we have $m_\pi (l) = 1$. To obtain a $\pi$-consistent graph, we first connect all black and grey vertices with each other, where the number of edges which are connected to any vertex $l$ equals $m_\pi (l)$. This leads, for example, to the graph in the second picture. Similarly, we connect all white and grey vertices respecting their multiplicities. One possibility to do so is shown in the third picture. The edges of the subgraph $G_\pi^{(1)}$ are the continuous lines and those of $G_\pi^{(2)}$ are the dashed lines.}
  \label{graphs}
\end{figure}

\begin{rems}
\begin{enumerate}
	\item The same graph can be consistent with different partitions.
	\item Any vertex in a consistent graph has degree two. This implies that any connected component of a consistent graph is a cycle.
	\item Any graph $G$ with vertex set $V(G)=\left\{1,\ldots,k\right\}$ and edge set $E(G)$ such that each vertex has degree $2$ is consistent with the partition $\pi$ of $\left\{1,\ldots,k\right\}$ with only one block.
	\item Both ends of any edge $e\in E(G)$ are in the same set $\overline{B}_\pi^{(s)}$ for some $s=1,\ldots,\#\pi$, that is $f_G(e)\subseteq \overline{B}_\pi^{(s)}$.
	\item If $G$ is consistent with some $\pi\in\mathcal{P}(k)$, then the subgraphs $G_\pi^{(s)}$, $s=1,\ldots,\#\pi$, are uniquely determined apart from their order.
\end{enumerate}
\end{rems}

We denote by $\mathcal{I}(\pi)$ the set of all graphs consistent with the partition $\pi\in\mathcal{P}(k)$. To relate any tuple $\textbf{i}=(i_1,\ldots,i_k)\in\left\{1,\ldots,m\right\}^k$ to a graph $G\in\mathcal{I}(\pi)$ which reflects the structure of $\textbf{i}$ within the closed blocks of $\pi$, we proceed as follows: \\

\begin{enumerate}
	\item For any $s=1,\ldots,\#\pi$, denote by
\begin{equation*}
	r_s:=\sum_{l\in\overline{B}_\pi^{(s)}} m_\pi(l) = 2 \ \# B_\pi^{(s)}
\end{equation*}
the number of elements in $\overline{B}_\pi^{(s)}$ counted with their multiplicities. Now sort the elements $i_l$, $l\in\overline{B}_\pi^{(s)}$, in decreasing order to obtain a vector $x^{(s)}=(l_1,\ldots,l_{r_s})$ with $i_{l_1}\geq \ldots\geq i_{l_{r_s}}$, where each element $i_l$ is supposed to occur exactly $m_\pi(l)$ times. If $i_l=i_{l'}$ for some $l,l'\in\overline{B}_\pi^{(s)}$, we use the convention that the lower index comes first.
	\item[]
	\item Construct a graph $G$ with vertex set $V(G)=\left\{1,\ldots,k\right\}$ by drawing edges between the vertices $x^{(s)}_{2l-1}$ and $x^{(s)}_{2l}$ for any $s=1,\ldots,\#\pi$ and any $l=1,\ldots,\frac{r_s}{2}$.
\end{enumerate}

\grab

To see that $G$ is indeed an element of $\mathcal{I}(\pi)$, we define for any $s=1,\ldots,\#\pi$ the subgraph $G_\pi^{(s)}$ of $G$ to be the graph with vertex set $\overline{B}_\pi^{(s)}$ and edge set $E_\pi^{(s)}(G)$ induced by the vector $x^{(s)}$. These subgraphs obviously fit in the situation of Definition~\ref{def}. \\

Now fix any $G\in\mathcal{I}(\pi)$ and let $\mathcal{T}_m (\pi,G)$ denote the set of all tuples $\textbf{i}=(i_1,\ldots,i_k)\in\left\{1,\ldots,m\right\}^k$ which induce $G$ in the construction above. Equation \eqref{moments} thus becomes
\begin{multline}
\lim_{n\to\infty} \frac 1 m \E\left[\tr\left(\W_n^k\right)\right] \\
 = \lim_{n\to\infty} \sum_{\pi\in\mathcal{P}(k)} \frac{1}{m n^{k-\#\pi}} \sum_{G\in\mathcal{I}(\pi)} \sum_{(i_1,\ldots,i_k) \in \mathcal{T}_m (\pi,G)} \prod_{s=1}^{\#\pi} \E\bigg[\prod_{l\in \overline{B}_\pi^{(s)}} a(i_l,1)^{m_\pi(l)}\bigg],
\label{lim1}
\end{multline}

if the limits exist.

\subsection{Reduction of the set $\mathcal{I}(\pi)$}
\label{reduction}

We want to eliminate those graphs in $\mathcal{I}(\pi)$, $\pi\in\mathcal{P}(k)$, that do not contribute to the limit in \eqref{lim1}. We will see that these are graphs that do not have sufficiently many components. Therefore, recall that the connected components of $G$ are cycles, and let $1\leq r(G)\leq k$ be the number of such cycles denoted by $G_1,\ldots,G_{r(G)}$. We denote the vertex and the edge set of $G_l$ by $V(G_l)$ and $E(G_l)$, respectively. Defining $\#G_l:=\#V(G_l)=\#E(G_l)$, we can identify $G_l$ with a sequence $(v_1(G_l),\ldots,v_{\#G_l}(G_l))$ of vertices, such that \\

\begin{itemize}
	\item $V(G_l)=\{v_1(G_l),\ldots,v_{\#G_l}(G_l)\}$,
	\item[]
	\item $E(G_l)=\{e_1(G_l),\ldots,e_{\#G_l}(G_l)\}$, with $e_j(G_l)$ connecting the vertices $v_j(G_l)$ and $v_{j+1}(G_l)$, where $\#G_l+1$ is identified with $1$.
\end{itemize}

\grab

Further, for $e\in E(G)$, we denote by $\mu(e)=(\mu_1(e),\mu_2(e))$, $\mu_1(e)\leq \mu_2(e)$, the tuple of vertices connected by $e$. With this notation and assumption (A2), we conclude that for any $(i_1,\ldots,i_k)\in \mathcal{T}_m (\pi,G)$,
\begin{multline*}
\bigg|\prod_{s=1}^{\#\pi}  \E\bigg[\prod_{l\in \overline{B}_\pi^{(s)}} a(i_l,1)^{m_\pi(l)}\bigg]\bigg|
\leq c(k) \ \prod_{s=1}^{\#\pi} \prod_{e\in E_\pi^{(s)}(G)} t'(i_{\mu_1(e)},i_{\mu_2(e)}) \\
 = c(k) \ \prod_{e\in E(G)} t'(i_{\mu_1(e)},i_{\mu_2(e)})
 = c(k) \ \prod_{l=1}^{r(G)} \prod_{j=1}^{\#G_l} t'(i_{v_j(G_l)},i_{v_{j+1}(G_l)}),
\end{multline*}

where $E_\pi^{(s)}(G)$, $s=1,\ldots,\#\pi$, are the edge sets described in Definition~\ref{def}. We thus obtain the estimate
\begin{align*}
\sum_{(i_1,\ldots,i_k) \in \mathcal{T}_m (\pi,G)} \bigg|\prod_{s=1}^{\#\pi} \E\bigg[\prod_{l\in \overline{B}_\pi^{(s)}} a(i_l,1)^{m_\pi(l)}\bigg]\bigg|
& \leq c(k) \ \sum_{i_1,\ldots,i_k=1}^m \prod_{l=1}^{r(G)} \prod_{j=1}^{\#G_l} t'(i_{v_j(G_l)},i_{v_{j+1}(G_l)}) \\
& = c(k) \ \prod_{l=1}^{r(G)} \sum_{i_1,\ldots,i_{\#G_l} =1}^m \prod_{j=1}^{\#G_l} t'(i_j,i_{j+1}),
\end{align*}

where we used the fact that different components have distinct vertex sets for the latter identity. Since $t'(i,i')=\alpha^{|i-i'|}$ decays exponentially, we know that the right hand side in the equation above is of order $\mathcal{O}\left(m^{r(G)}\right)=\mathcal{O}\left(n^{r(G)}\right)$. In particular, any graph $G\in\mathcal{I}(\pi)$ with $r(G)<k-\#\pi+1$ gives negligible contribution to the limit in \eqref{lim1}. The following lemma states that in this case, there are only graphs with $k-\#\pi+1$ components left.

\begin{lemma}
Let $\pi\in\mathcal{P}(k)$ with $q:=\#\pi\leq k$. Any graph $G\in\mathcal{I}(\pi)$ has at most $k-q+1$ connected components.
\label{maximum}
\end{lemma}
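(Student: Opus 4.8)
The plan is to count edges and vertices across the subgraphs $G_\pi^{(s)}$, $s=1,\ldots,q$, and to exploit the fact that the closed blocks $\overline{B}_\pi^{(s)}$ glue these subgraphs together in a connected way. First I would record the basic numerology: by (C1) the subgraph $G_\pi^{(s)}$ has $\#E_\pi^{(s)}(G) = \tfrac12 \sum_{l\in\overline{B}_\pi^{(s)}} m_\pi(l) = \# B_\pi^{(s)}$ edges, so by (C2) the total number of edges of $G$ is $\#E(G) = \sum_{s=1}^q \# B_\pi^{(s)} = k$, which of course also follows from every vertex having degree two. The point of isolating it this way is that each $G_\pi^{(s)}$ supported on $\overline B_\pi^{(s)}$ contributes exactly $\#B_\pi^{(s)}$ edges, while $\#\overline B_\pi^{(s)} = \#B_\pi^{(s)} + (\text{number of }l\text{ with }m_\pi(l)=1\text{ in that block})$.

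Next I would build an auxiliary graph $\Gamma$ on the $q$ blocks of $\pi$: put a vertex for each $s\in\{1,\ldots,q\}$, and for each $l\in\{2,\ldots,k\}$ with $l\sim_\pi l-1$ do nothing, but for each $l$ with $l\not\sim_\pi l-1$ join the block containing $l-1$ to the block containing $l$. Equivalently, walking around the cycle $1,2,\ldots,k,1$ and recording the sequence of blocks visited yields a closed walk in $\Gamma$ that traverses every vertex of $\Gamma$, so $\Gamma$ is connected; and the number of such ``transition'' edges is exactly the number of $l$ with $m_\pi(l)=1$, which is $\sum_{s=1}^q(\#\overline B_\pi^{(s)} - \#B_\pi^{(s)})$. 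Since a connected graph on $q$ vertices has at least $q-1$ edges, we get $\sum_{s=1}^q(\#\overline B_\pi^{(s)} - \#B_\pi^{(s)}) \ge q-1$.

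With these two facts in hand the estimate is essentially additive over components, but the cleanest route is to estimate $r(G)$ directly. Each connected component $G_l$ is a cycle with $\#G_l = \#V(G_l) = \#E(G_l)$, so $\sum_{l=1}^{r(G)} \#E(G_l) = k$. On the other hand I would bound $r(G)$ from above by relating cycles to the subgraph decomposition: each $G_\pi^{(s)}$ is a disjoint union of paths/cycles on $\overline B_\pi^{(s)}$ with $\#B_\pi^{(s)}$ edges, hence has at least $\#\overline B_\pi^{(s)} - \#B_\pi^{(s)}$ connected components, and a cycle of $G$ is obtained by concatenating pieces coming from possibly several of the $G_\pi^{(s)}$. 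Tracking how the components of the $G_\pi^{(s)}$ can merge into cycles of $G$ — each merging step across two different blocks reduces the component count by at most one, and at most $q-1$ independent mergings are available because of the connectedness of $\Gamma$ established above — one gets
\[
r(G) \;\le\; \sum_{s=1}^q \#(\text{components of }G_\pi^{(s)}) \;-\; \Bigl(\sum_{s=1}^q(\#\overline B_\pi^{(s)}-\#B_\pi^{(s)}) - (q-1)\Bigr),
\]
and since $\#(\text{components of }G_\pi^{(s)}) \le \#\overline B_\pi^{(s)} - \#B_\pi^{(s)}$ this telescopes to $r(G) \le q-1 + (\,k - \sum_s \#B_\pi^{(s)} + \text{correction}\,)$; carrying out the bookkeeping with $\sum_s \#B_\pi^{(s)} = k$ yields $r(G) \le k - q + 1$.

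\textbf{Main obstacle.} The delicate step is the last one: turning the qualitative statement ``the closed blocks glue the subgraphs into one connected object in at most $q-1$ essential ways'' into the exact inequality $r(G)\le k-q+1$. The subgraphs $G_\pi^{(s)}$ overlap precisely on the degree-one vertices (the grey vertices in Figure~\ref{graphs}), and a single cycle of $G$ may pass back and forth between several blocks many times; one must argue that only the ``first visit'' to each new block can create a genuine reduction in the component count, so the total reduction is controlled by the spanning-tree bound $q-1$ for $\Gamma$. I expect this to require either a careful induction on $q$ (contracting one block at a time, i.e. merging $B_\pi^{(s)}$ with an adjacent block and checking the inequality is preserved) or an Euler-characteristic argument comparing $\#V - \#E$ for $G$, for the $G_\pi^{(s)}$'s, and for the overlap set of shared vertices; the induction on the number of blocks looks the most robust and is the route I would write up.
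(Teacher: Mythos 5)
Your structural input is the right one --- the connectivity of your auxiliary block graph $\Gamma$ is exactly equivalent to the fact the paper establishes first (for every proper subset $A$ of blocks, $\bigcup_{s\in A}\overline{B}_\pi^{(s)}$ meets $\bigcup_{s\in A^c}\overline{B}_\pi^{(s)}$), and your edge counts $\#E_\pi^{(s)}(G)=\#B_\pi^{(s)}$ and $\#E(G)=k$ are correct. But the final bookkeeping, which you yourself flag as the delicate step, does not go through as written, for two reasons. First, the inequality $\#(\text{components of }G_\pi^{(s)})\le \#\overline{B}_\pi^{(s)}-\#B_\pi^{(s)}$ is false: $G_\pi^{(s)}$ is a disjoint union of $\#\overline{B}_\pi^{(s)}-\#B_\pi^{(s)}$ paths \emph{plus possibly some cycles} (e.g.\ a self-loop at a vertex $l$ with $l\sim_\pi l-1$, i.e.\ $m_\pi(l)=2$), so the correct direction is $\ge$ --- which you in fact state two lines earlier and then reverse. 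Second, the displayed bound on $r(G)$ is asserted rather than derived: the observation that each merging step reduces the component count by at most one yields a \emph{lower} bound on $r(G)$; for an upper bound you must show that at least $q-1$ mergings genuinely occur, and that is precisely the content still missing.

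The clean way to finish --- and essentially what the paper does --- is to use the connectivity you established to order the blocks so that each $\overline{B}_\pi^{(s)}$ with $s\ge 2$ shares a vertex with $\bigcup_{s'<s}\overline{B}_\pi^{(s')}$, and then to count components of $G$ directly rather than components of the subgraphs. Every vertex of $G_\pi^{(s)}$ has degree at least $1$ there, so the components of $G$ met by $V(G_\pi^{(1)})$ number at most $\#E_\pi^{(1)}(G)=\#B_\pi^{(1)}$; and for $s\ge 2$ the shared vertex lies on an edge of $G_\pi^{(s)}$ whose component of $G$ has already been counted, so $G_\pi^{(s)}$ contributes at most $\#B_\pi^{(s)}-1$ new components. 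Summing gives $k-q+1$, with no need to control the internal component structure of the $G_\pi^{(s)}$. (Your route can be repaired: one has $r(G)\le C-(q-1)$, where $C$ is the total number of subgraph components, by lifting a spanning tree of $\Gamma$ to $q-1$ gluings at shared vertices that each merge two distinct clusters, and $C\le k$ because every path or cycle component carries at least one edge; but this still requires the spanning-tree lifting argument you have not supplied, and is more work than the direct count.)
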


\begin{proof}
First note that for any $A\subset \left\{1,\ldots,q\right\}$ with $\#A<q$, we can conclude that $\left(\bigcup_{s\in A} \overline{B}_\pi^{(s)}\right)\cap\left(\bigcup_{s\in A^c} \overline{B}_\pi^{(s)}\right) \neq \emptyset$. Indeed, an empty intersection implies
\begin{equation*}
l\in \bigcup_{s\in A} \overline{B}_\pi^{(s)} \ \Longrightarrow \ l-1\in \bigcup_{s\in A} B_\pi^{(s)}.
\end{equation*}
But in this case, the identity $\bigcup_{s\in A} B_\pi^{(s)} = \left\{1,\ldots,k\right\}$ holds, being a contradiction to $\#A<q=\#\pi$. \\
Now consider a graph $G\in\mathcal{I}(\pi)$ with the subgraphs $G_\pi^{(1)},\ldots,G_\pi^{(q)}$ as in Definition~\ref{def}. Recall that each subgraph $G_\pi^{(s)}$ has vertex set $\overline{B}_\pi^{(s)}$ and edge set $E_\pi^{(s)}(G)$ with $\# E_\pi^{(s)}(G) = \#B_\pi^{(s)}$. Start with $s=1$. The maximum number of different components the vertices of $G_\pi^{(1)}$ belong to equals the number of edges in $G_\pi^{(1)}$. If $q=1$, the proof is finished since we have at most $\#B_\pi^{(1)} = k$ components. Otherwise take some different subgraph $G_\pi^{(s)}$, $s=2,\ldots,q$, that has a common vertex with $G_\pi^{(1)}$. Our considerations at the beginning of the proof with $A=\{1\}$ ensure that this choice is possible. Thus, $G_\pi^{(s)}$ induces at most $\#B_\pi^{(s)}-1$ new components in $G$. Proceeding now by taking in each step a subgraph that has a common vertex with one of the subgraphs already considered, we see that the number of components of $G$ does not exceed
\begin{equation*}
\#B_\pi^{(1)} + \sum_{s=2}^{q} \left(\#B_\pi^{(s)}-1\right) = k-q+1.
\end{equation*}
This completes the proof.

\end{proof}

\begin{remark}
The proof of Lemma~\ref{maximum} makes obvious that if we consider some graph $G\in\mathcal{I}(\pi)$ with $k-\#\pi+1$ components, then for any $s=1,\ldots,\#\pi$, all edges of $G_\pi^{(s)}$ must belong to different components in $G$.
\label{samecomp}
\end{remark}

Let $\mathcal{I}^*(\pi)$ denote the set of all graphs $G\in\mathcal{I}(\pi)$ with $r(G)=k-\#\pi+1$. Our considerations above allow us to conclude that

\begin{multline}
\lim_{n\to\infty} \frac 1 m \E\left[\tr\left(\W_n^k\right)\right] \\
= \lim_{n\to\infty} \sum_{\pi\in\mathcal{P}(k)} \frac{1}{m n^{k-\#\pi}} \sum_{G\in\mathcal{I}^*(\pi)} \sum_{(i_1,\ldots,i_k) \in \mathcal{T}_m (\pi,G)} \prod_{s=1}^{\#\pi} \E\bigg[\prod_{l\in \overline{B}_\pi^{(s)}} a(i_l,1)^{m_\pi(l)}\bigg],
\label{limit3}
\end{multline}

if the limits exist.

\subsection{Combinatorial results on the sets $\mathcal{I}^*(\pi)$}

In order to calculate the limit in \eqref{limit3}, we need to deal with the sets $\mathcal{I}^*(\pi)$ in more detail. Therefore, it will prove useful to distinguish between crossing and non-crossing partitions. A partition $\pi\in\mathcal{P}(k)$ is called \emph{crossing} if there are elements $i<j<i'<j'$ such that $i\sim_\pi i'$ and $j\sim_\pi j'$, but $i,i'$ are not in the same block as $j,j'$. Otherwise, $\pi$ is said to be \emph{non-crossing}. We denote the set of all non-crossing partitions of $\{1,\ldots,k\}$ by $\on{NC}(1,\ldots,k)=\on{NC}(k)$. For any $\pi\in\on{NC}(k)$, we will resort to the notion of the \emph{Kreweras complement} $K(\pi)\in\on{NC}(k)$ (cf. \cite{NicaSpeicher}, Definition 9.21). To define $K(\pi)$, consider the numbers $1,\ldots,k$ and $\bar{1},\ldots,\bar{k}$. We interlace them in the alternating way $1 \ \bar{1} \ 2 \ \bar{2} \ldots k \ \bar{k}$. Then, $K(\pi)\in\on{NC}(\bar{1},\ldots,\bar{k})\cong \on{NC}(k)$ is the partition with the following two properties:

\grab

\begin{enumerate}
	\item If $\pi\cup K(\pi)$ is the partition of $\{1,\bar{1}, 2, \bar{2}, \ldots, k, \bar{k}\}$ consisting of the blocks $B_\pi^{(1)},\ldots,B_\pi^{(\#\pi)}, B_{K(\pi)}^{(1)},\ldots,B_{K(\pi)}^{(\#K(\pi))}$, then $\pi\cup K(\pi)\in\on{NC}(1,\bar{1}, 2, \bar{2}, \ldots, k, \bar{k})$;
	\item[]
	\item if a further partition $\sigma\in\on{NC}(\bar{1},\ldots,\bar{k})$ satisfies the property $\pi\cup\sigma\in\on{NC}(1,\bar{1}, 2, \bar{2}, \ldots, k, \bar{k})$, then $K(\pi)$ is bigger than $\sigma$ in the sense that any block of $\sigma$ is contained in some block of $K(\pi)$.
\end{enumerate}

\grab

The construction of the Kreweras complement is illustrated in Figure \ref{kreweras}. Since $K^{2k}(\pi)=\pi$, we can conclude that the map $K:\on{NC}(k)\rightarrow\on{NC}(k)$, $\pi\mapsto K(\pi)$, is a bijection. A further property we will resort to is the equality $\# K(\pi)=k-\#\pi+1$ (for more details see \cite{NicaSpeicher}). The following lemma reveals the structure of the sets $\mathcal{I}^*(\pi)$, and puts them into context with the Kreweras complement.

\begin{figure}[ht]
  \centering
  \begin{minipage}[b]{5.5cm}
    \includegraphics[trim = 0cm 5.5cm 0cm 0cm, width=55mm]{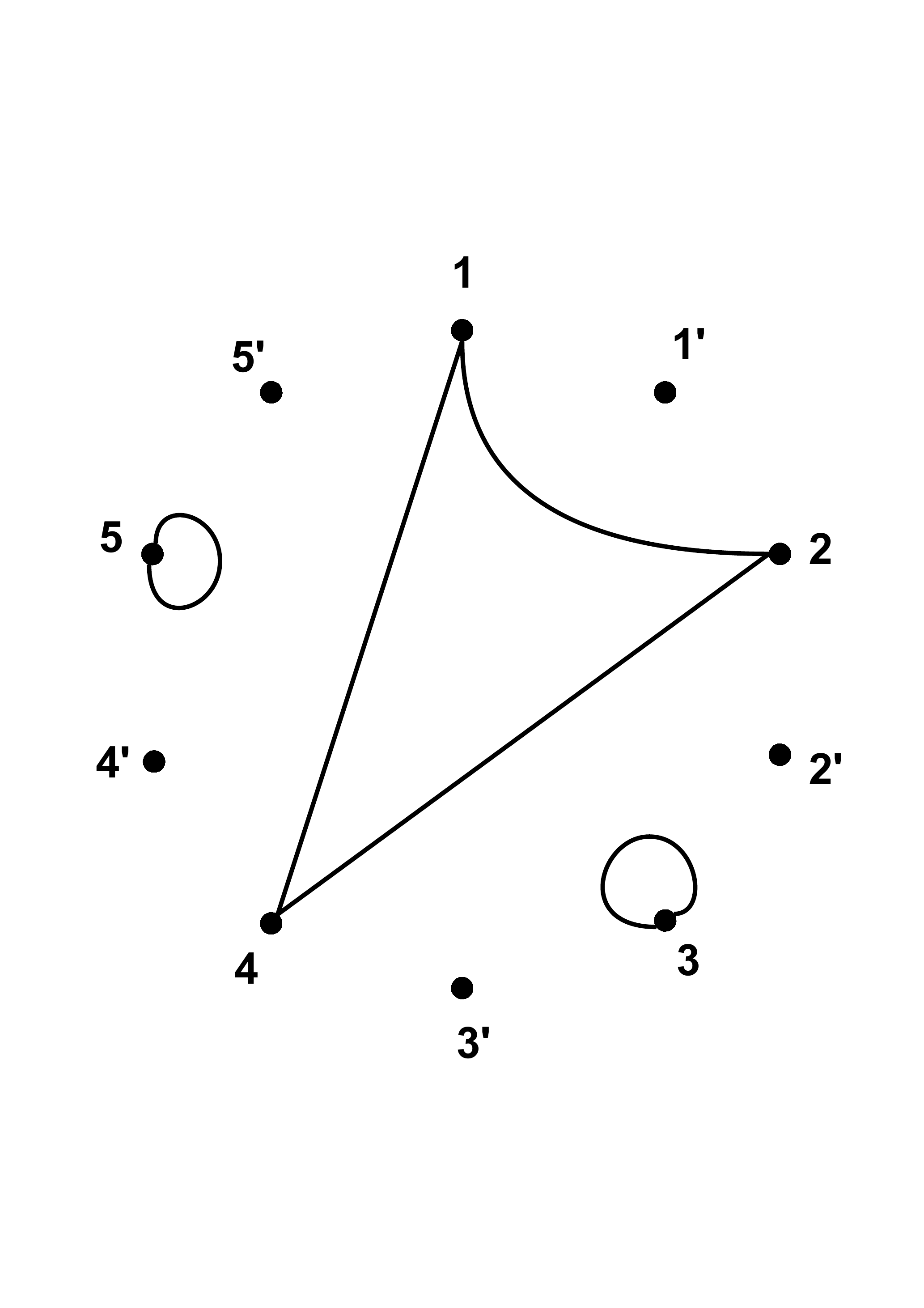}
  \end{minipage}
  \begin{minipage}[b]{5.5cm}
    \includegraphics[trim = 0cm 5.5cm 0cm 0cm, width=55mm]{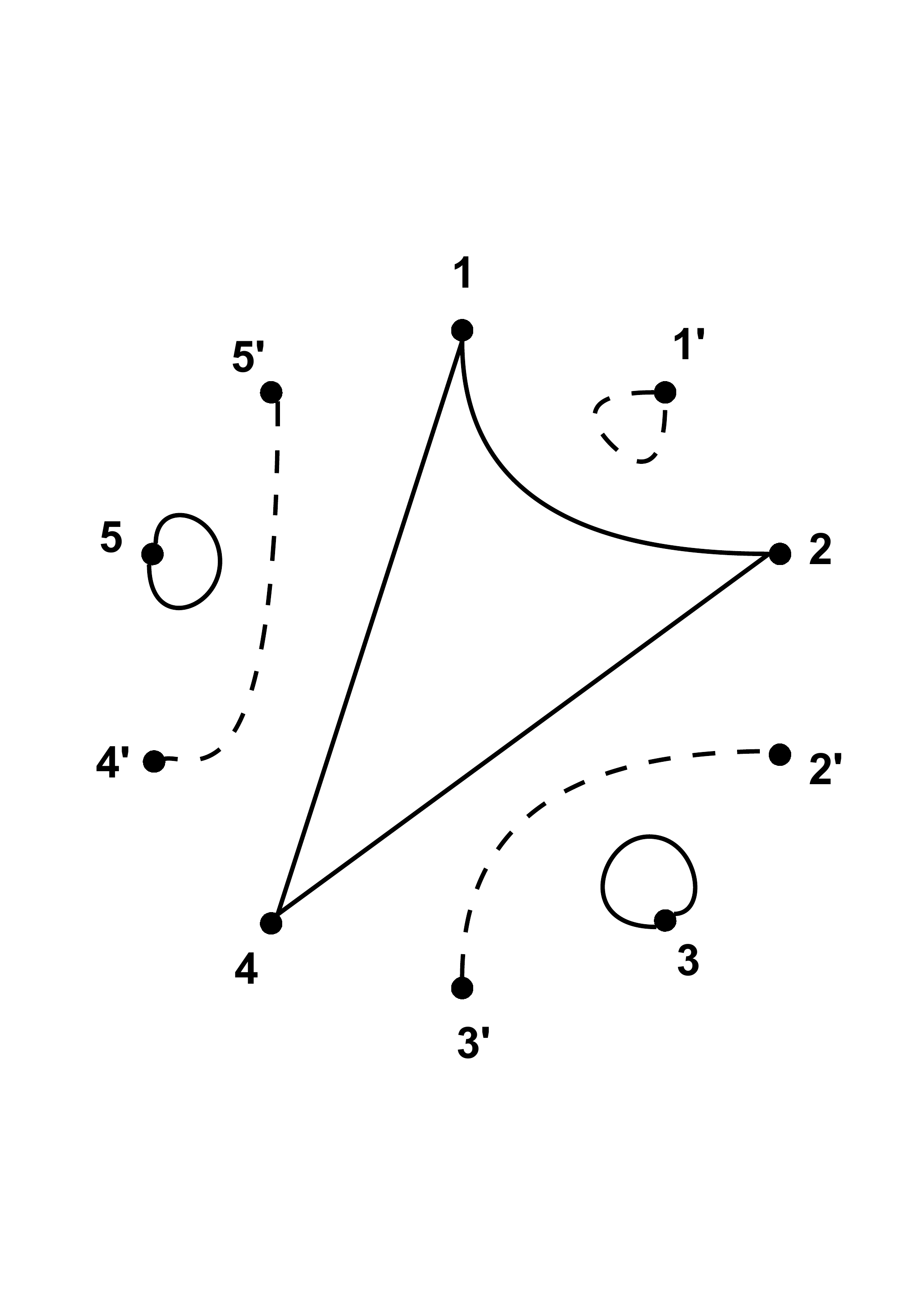}
  \end{minipage}
  \caption{Here, we take $k=5$, and $\pi=\{\{1,2,4\},\{3\},\{5\}\}$. In the first circle, we connect the elements that are in the same block of $\pi$, and draw the vertices $1',\ldots,5'$. The Kreweras complement is then obtained by connecting as many primed vertices as possible without crossing any line. This yields $K(\pi)=\{\{1'\},\{2',3'\},\{4',5'\}\}$ as indicated by the dashed lines in the second circle.}
  \label{kreweras}
\end{figure}

\begin{lemma}
Let $k\in\N$ and $\pi\in\mathcal{P}(k)$.

\begin{enumerate}
	\item If $\pi$ is crossing, we have $\mathcal{I}^*(\pi)=\emptyset$.
	\item If $\pi$ is non-crossing, then $\#\mathcal{I}^*(\pi) = 1$. Moreover, $\pi$ is the Kreweras complement of the partition induced by the unique graph $G\in\mathcal{I}^*(\pi)$ by taking the components as blocks.
\end{enumerate}
\label{ncr}
\end{lemma}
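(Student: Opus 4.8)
The plan is to prove both parts by carefully tracking how a $\pi$-consistent graph with the maximal number $k-\#\pi+1$ of components must be built. The construction in Lemma~\ref{maximum} and Remark~\ref{samecomp} will be the backbone: a graph $G\in\mathcal{I}^*(\pi)$ achieves the maximum only if, whenever we add the subgraphs $G_\pi^{(s)}$ one at a time (each sharing at least one vertex with the union of those already placed), every new subgraph $G_\pi^{(s)}$ creates exactly $\#B_\pi^{(s)}-1$ new components after the first one creates $\#B_\pi^{(1)}$, and moreover the edges within any single $G_\pi^{(s)}$ lie in pairwise distinct components of $G$. The first step is to make this rigid: the sharing vertex between a newly added $G_\pi^{(s)}$ and the union of the previous subgraphs must be \emph{unique}, and it must be a vertex of multiplicity $m_\pi=1$ (a ``grey'' vertex in the language of Figure~\ref{graphs}), since a multiplicity-$2$ shared vertex, or two distinct shared vertices, would merge components and lose count. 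This forces a tree-like incidence structure on how the closed blocks overlap.

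For part (i), I would argue by contradiction: suppose $\pi$ is crossing, so there are $i<j<i'<j'$ with $i\sim_\pi i'$, $j\sim_\pi j'$, but the block of $i,i'$ differs from that of $j,j'$. Translating to closed blocks and the cyclic structure on $\{1,\dots,k\}$, I want to show the two subgraphs $G_\pi^{(s)}$ and $G_\pi^{(s')}$ corresponding to these two blocks are forced to share more than one vertex, or to share a vertex in a way that merges two components, contradicting the rigidity just established. The key geometric point is that a consistent graph's components are cycles on $\{1,\dots,k\}$ and two cycles coming from crossing blocks cannot be ``nested''; I would spell out that the crossing pattern obstructs the tree-like overlap structure that $\mathcal{I}^*(\pi)\neq\emptyset$ would require, hence $\mathcal{I}^*(\pi)=\emptyset$.

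For part (ii), assume $\pi\in\on{NC}(k)$. Existence of a graph $G\in\mathcal{I}^*(\pi)$ with components giving the Kreweras complement: I would construct $G$ directly from the standard circular picture of $\pi$ and $K(\pi)$ (as in Figure~\ref{kreweras}), placing on the vertices $1,\dots,k$ the edges dictated by the blocks of $K(\pi)$ — each block $\{v_1<\dots<v_p\}$ of $K(\pi)$ becomes the cycle $v_1\to v_2\to\cdots\to v_p\to v_1$ — and then verify (C1) and (C2) hold with respect to $\pi$, i.e. that the degree of $l$ in the $s$-th subgraph matches $m_\pi(l)$ and the edge sets partition correctly; this is where the duality $\#K(\pi)=k-\#\pi+1$ and the interlacing definition of $K$ do the bookkeeping. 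For uniqueness, I would use the rigidity from the first paragraph: any $G\in\mathcal{I}^*(\pi)$ has the tree-like overlap structure, and a non-crossing $\pi$ together with this structure leaves no freedom — the only way to connect each $\overline{B}_\pi^{(s)}$ respecting multiplicities while keeping all edges in distinct maximal-count components is the cyclic ordering inherited from the circle, which is exactly the Kreweras picture. Finally I read off that the partition-by-components of $G$ satisfies the two defining properties of $K^{-1}$, equivalently $\pi=K$ of that partition, using the involutivity-up-to-rotation $K^{2k}=\mathrm{id}$.

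The main obstacle I anticipate is the uniqueness half of part (ii): showing that once the overlap structure is pinned down to be tree-like with multiplicity-one sharing vertices, the cyclic orientation of each subgraph's edges is completely forced. Intuitively this is ``there is exactly one non-crossing way to close up each block into a cycle compatible with all the others,'' but converting that intuition into a clean inductive argument — peeling off an innermost block of $\pi$, or an ``interval'' block, and invoking the induction hypothesis on the reduced partition — will require care about how the closed-block structure and the function $m_\pi$ transform under removing a block. I would organize the induction on $\#\pi$ (or on $k$), with the base case $\#\pi=1$ handled by Remark~(iii) after the definition of consistent graphs plus the elementary fact that the only non-crossing way to form a single $k$-cycle on $1,\dots,k$ is the standard one, matching $K$ of the all-singletons partition.
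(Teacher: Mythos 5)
Your plan takes a genuinely different route from the paper's proof, which proceeds by induction on $k$: the paper deletes an element $l$ (chosen so that $\{l\}$ is a singleton or $l\sim_\pi l-1$ in the non-crossing case, and so that the reduced partition $\pi_l$ stays crossing in the crossing case), performs an explicit edge surgery turning $G\in\mathcal{I}(\pi)$ into $G_l\in\mathcal{I}(\pi_l)$, shows this is a bijection $\mathcal{I}^*(\pi)\to\mathcal{I}^*(\pi_l)$ in the relevant cases, and tracks the Kreweras statement through the same induction. You instead try to establish a global ``rigidity'' of the overlap structure of the closed blocks and read everything off from that. In principle this could work, but as written there are two genuine gaps.

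First, part (i). Your proposed mechanism --- that the two subgraphs belonging to the two crossing blocks ``are forced to share more than one vertex'' --- fails in general: for $\pi=\{\{1,5\},\{3,7\},\{2\},\{4\},\{6\},\{8\}\}\in\mathcal{P}(8)$ the blocks $\{1,5\}$ and $\{3,7\}$ cross, yet their closed blocks $\{1,2,5,6\}$ and $\{3,4,7,8\}$ are \emph{disjoint}. The obstruction is global, mediated by the other closed blocks (here the incidence structure between closed blocks and shared vertices contains a cycle, a $K_{2,4}$), and proving that every crossing partition produces such a cycle in the incidence structure is precisely the content of part (i); you have deferred it rather than proved it. (Also, your observation that a shared vertex must have multiplicity $1$ is automatic from the definitions --- a vertex with $m_\pi(l)=2$ satisfies $l\sim_\pi l-1$ and hence lies in only one closed block --- so it carries no force as a maximality consequence.) Second, part (ii): you correctly identify uniqueness as the main obstacle, but the proposal stops exactly where the work begins; the claim that there is ``exactly one non-crossing way to close up each block into a cycle compatible with all the others'' is what needs an argument, and the peeling-off induction you gesture at is essentially the paper's proof, not yet carried out. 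The existence half (building $G$ by turning each block of the inverse Kreweras complement into a cycle in increasing order and checking (C1)--(C2)) is sound modulo bookkeeping, but by itself it yields neither $\#\mathcal{I}^*(\pi)=1$ nor part (i).
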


\begin{proof}
We want to verify both statements by induction over $k\in\N$. Therefore, it will be necessary to relate any partition $\pi\in\mathcal{P}(k)$ to some partition in $\mathcal{P}(k-1)$. To this end, we define for any $l=1,\ldots,k$ a partition $\pi_l\in\mathcal{P}(k-1)$ obtained from $\pi$ by first deleting $l$ and then, relabeling any $j\in\{l+1,\ldots,k\}$ to $j-1$. We now want to assign a graph $G_l\in\mathcal{I}(\pi_l)$ to $G\in\mathcal{I}(\pi)$. Thus, suppose that $l\in B_\pi^{(s)}$ for some $s=1,\ldots,\#\pi$. In particular, we have that $l+1\in \overline{B}^{(s)}_\pi$. By definition of consistent graphs, $l$ and $l+1$ are connected to vertices $p$ and $q$, respectively, which are both elements of $\overline{B}^{(s)}_\pi$. Hence, we can substitute the edges $\{l,p\}$ and $\{l+1,q\}$ by $\{l,l+1\}$ and $\{p,q\}$ to obtain a graph which is still an element of $\mathcal{I}(\pi)$. Now eliminate the edge $\{l,l+1\}$ and take the remaining edge connected to $l$ and link it to $l+1$, instead. Consequently, $l$ has become an isolated vertex which can be erased. After relabeling the vertices as before, we finally get a graph $G_l\in\mathcal{I}(\pi_l)$. We now distinguish three cases: \\

\emph{Case 1: $\{l\}$ is a singleton of $\pi$.} In this case, we obtain that $\overline{B}^{(s)}_\pi = \{l,l+1\}$. In particular, there is an edge between $l$ and $l+1$ in any graph $G\in\mathcal{I}(\pi)$. The procedure above then guarantees that $G_l$ and $G$ have an equal number of components. Since $\#\pi_l = \#\pi-1$ and thus, $k-\#\pi+1 = (k-1)-\#\pi_l+1$, we further obtain that $G_l\in\mathcal{I}^*(\pi_l)$ if and only if $G\in\mathcal{I}^*(\pi)$. \\

\emph{Case 2: $l\sim_\pi l-1$.} Assume that $G\in\mathcal{I}^*(\pi)$. Then, the vertex $l$ necessarily has a self-connecting edge. Thus, $G_l$ has one component less than $G$. In this case, the fact that $\#\pi_l = \#\pi$ and $(k-\#\pi+1)-1 = (k-1)-\#\pi_l+1$, implies that $G_l\in\mathcal{I}^*(\pi_l)$ if and only if $G\in\mathcal{I}^*(\pi)$. \\

\emph{Case 3: $\{l\}$ is not a singleton of $\pi$ and $l\not\sim_\pi l-1$.} If this is the case, then it might happen that we merge two different components when substituting the edges in $G$. Hence, $G_l$ has either as much components as $G$ or one less. \\

We want to remark that in case 1 and 2, we have a bijection between $\mathcal{I}^*(\pi)$ and $\mathcal{I}^*(\pi_l)$. Indeed, this can be explained by the reversibility of the construction of $G_l$. The simpler case is the second one where we only need to re-insert the vertex $l$ and draw a self-connecting edge. In the first case, $l$ has to be re-inserted, too. However, we have to argue why there is a unique way to connect $l$ to $G_l$ which leads to the graph $G$. Therefore, assume without loss of generality that $B_{\pi_l}^{(1)}$ denotes the block of $l$ in $\pi_l$. Since $G_l$ is a $\pi_l$-consistent graph, we know that $l$ is connected to some $j\in \overline{B}_{\pi_l}^{(1)}$. Further, $l$ is connected to a second vertex $p$. By Remark~\ref{samecomp}, the fact that $G_l$ has the maximum number of components implies that either $p=j$ or $p\not\in\overline{B}_{\pi_l}^{(1)}$. Consequently, the choice of $p$ is unique. To obtain $G$ from $G_l$, we thus need to introduce a new vertex $l'$. Since $l'$ is supposed to adopt the role of the vertex following $l-1$, we substitute the edge $\{l,p\}$ by $\{l',p\}$. Now relabel all vertices $q\in\{l,\ldots,k-1\}$ to $q+1$ and $l'$ to $l$. Due to the fact that the block $\{l\}$ is a singleton in $\pi$, we are forced to draw an edge between $l$ and $l+1$. But then we get exactly the graph $G$. \\

(i) Suppose that $\pi\in\mathcal{P}(k)$ is crossing. Since a crossing partition requires at least four elements, we start with $k=4$. In this case, the only crossing partition $\pi\in\mathcal{P}(4)$ is given by $\pi=\{\{1,3\},\{2,4\}\}$, implying that $\overline{B}_\pi^{(1)}=\overline{B}_\pi^{(2)}=\{1,2,3,4\}$. Hence, a graph $G\in\mathcal{I}(\pi)$ has at most $2$ components. However, $G\in\mathcal{I}^*(\pi)$ is supposed to have $k-\#\pi+1=3$ components which allows us to conclude that, indeed, $\mathcal{I}^*(\pi)=\emptyset$.

Now take any $k\in\N$ and a crossing partition $\pi\in\mathcal{P}(k)$. Fix some $l\in\{1,\ldots,k\}$ such that $\pi_l$ is still crossing. This choice is possible whenever $k\geq 5$. Further, suppose that there is some $G\in\mathcal{I}^*(\pi)$, \ie $G$ has $k-\#\pi+1$ components. Our construction above yields that in case 1 and 2, $G_l\in\mathcal{I}^*(\pi_l)$, which is contradictory to the inductional hypothesis. Assuming that case holds, we obtain that $G_l$ has at least $k-\#\pi=(k-1)-\#\pi_l+1$ components. This is also impossible. To sum up, we have $\mathcal{I}^*(\pi)=\emptyset$. \\

(ii) Suppose that $\pi\in\mathcal{P}(k)$ is non-crossing. Here, we need to start with $k=1$, implying that $\pi=\{\{1\}\}$. Obviously, there is exactly one possibility to obtain a consistent graph $G$, which can be realized by drawing a self-connecting edge for vertex $1$. In particular, we have one component. This is the maximum number in this case and we see that $\mathcal{I}^*(\pi)$ contains exactly one element. The partition induced by $G$ is equal to $\pi$, and $K(\pi)=\pi$ since $k=1$. Hence, (ii) holds.

Take any $k\in\N$ and assume that $\pi\in\mathcal{P}(k)$ is non-crossing. Then, we can find some $l\in\{1,\ldots,k\}$ such that either $\{l\}$ is a singleton, or $l\sim_\pi l-1$. These are the cases 1 and 2 in the construction at the beginning of the proof. Since they ensured a bijection between $\mathcal{I}^*(\pi)$ and $\mathcal{I}^*(\pi_l)$, the inductional hypothesis yields $\#\mathcal{I}^*(\pi)=\#\mathcal{I}^*(\pi_l)=1$. It remains to verify that $\pi$ is the Kreweras complement of the partition induced by the unique graph $G\in\mathcal{I}^*(\pi)$. We will denote this partition by $\eta$. Let $\eta_l$ be the partition induced by $G_l$. In particular, we have $K(\eta_l)=\pi_l$. First assume that $\{l\}$ is a singleton in $\pi$. In this case, $\{l,l+1\}$ is a block of $\eta$. As a consequence, $K(\eta)$ is the partition obtained from $K(\eta_l)=\pi_l$ after relabeling and inserting the block $\{l\}$. This is exactly the procedure how to re-construct $\pi$ from $\pi_l$, implying $K(\eta)=\pi$. Now suppose that $l\sim_\pi l-1$. Here, $\{l\}$ is a singleton in $\eta$. Hence, we obtain $K(\eta)$ from $K(\eta_l)$ if we relabel the elements $q\in\{l,\ldots,k-1\}$ to $q+1$, and add the element $l$ to the block of $l-1$. Again, the resulting partition is equal to $\pi$. This proves (ii).

\end{proof}

For $\pi\in \mathrm{NC}(k)$, let $G(\pi)$ denote the unique element in $\mathcal{I}^*(\pi)$, and define $\mathcal{T}_m (\pi):=\mathcal{T}_m (\pi,G(\pi))$. Lemma~\ref{ncr} now entails the relation
\begin{multline}
\lim_{n\to\infty} \frac 1 m \E\left[\tr\left(\W_n^k\right)\right] \\
 = \lim_{n\to\infty} \sum_{\pi\in \mathrm{NC}(k)} \frac{1}{m n^{k-\#\pi}} \sum_{(i_1,\ldots,i_k) \in \mathcal{T}_m (\pi)} \ \prod_{s=1}^{\#\pi} \E\bigg[\prod_{l\in \overline{B}_\pi^{(s)}} a(i_l,1)^{m_\pi(l)}\bigg],
\label{limit4}
\end{multline}

if the limits exist.

\subsection{A representation of the joint moments}

Fix $\pi\in \mathrm{NC}(k)$ and $(i_1,\ldots,i_k) \in \mathcal{T}_m (\pi)$. For convenience, we put $G:=G(\pi)$. Recall the subgraphs $G^{(s)}:=G_\pi^{(s)}$, $s=1,\ldots,\#\pi$, as introduced in Definition~\ref{def}. Denote by $E^{(s)}(G):=\{e_1^{(s)},\ldots,e_{\# B_\pi^{(s)}}^{(s)}\}$ the set of the edges of $G^{(s)}$. To finally apply formula (A1) in our context, we define for any $l=1,\ldots,\# B_\pi^{(s)}$
\begin{align*}
i_{2l-1}^{(s)} := i_{\mu_1\left(e_l^{(s)}\right)}, \qquad i_{2l}^{(s)} := i_{\mu_2\left(e_l^{(s)}\right)}.
\end{align*}

We then have
\begin{align*}
\prod_{s=1}^{\#\pi}  \E\bigg[\prod_{l\in \overline{B}_\pi^{(s)}} a(i_l,1)^{m_\pi(l)}\bigg]
 = \prod_{s=1}^{\#\pi} \ \bigg( \prod_{l=1}^{\# B_\pi^{(s)}} t\left(i_{2l-1}^{(s)} ,i_{2l}^{(s)}\right) + R\left(i_1^{(s)},\ldots ,i_{2\# B_\pi^{(s)}}^{(s)} \right)\bigg),
\end{align*}

Recall that we denoted by $G_1,\ldots,G_{k-\#\pi+1}$ the components of $G$, and the edges of $G_l$ by $\{v_j(G_l),v_{j+1}(G_l)\}$, $j=1,\ldots,\#G_l$. With this notation, we obtain
\begin{equation*}
\prod_{s=1}^{\#\pi}  \E\bigg[\prod_{l\in \overline{B}_\pi^{(s)}} a(i_l,1)^{m_\pi(l)}\bigg]
= \prod_{l=1}^{k-\#\pi+1} \prod_{j=1}^{\#G_l} t(i_{v_j(G_l)},i_{v_{j+1}(G_l)}) + R'\left(i_1,\ldots,i_{k}\right),
\end{equation*}

where
\begin{equation*}
R'\left(i_1,\ldots,i_{k}\right)
= \sum_{\substack{A\subseteq \{1,\ldots,\#\pi\}, \\ A\neq \emptyset}} \prod_{s\in A} R\left(i_1^{(s)},\ldots ,i_{2\# B_\pi^{(s)}}^{(s)} \right) \prod_{s\in A^c} \prod_{l=1}^{\# B_\pi^{(s)}} t\left(i_{2l-1}^{(s)} ,i_{2l}^{(s)}\right).
\end{equation*}

We want to verify that $R'\left(i_1,\ldots,i_{k}\right)$ gives negligible contribution to the limit. Therefore, fix $A\subseteq \{1,\ldots,\#\pi\}$, $A\neq \emptyset$. Without loss of generality, we assume that $1\in A$. Taking account of the identity in (A1) and the estimate in (A2), we conclude that for any $s=1,\ldots,\#\pi$,
\begin{equation*}
\left|R\left(i_1^{(s)},\ldots ,i_{2\# B_\pi^{(s)}}^{(s)}\right)\right|
\leq c(k) \prod_{l=1}^{\# B_\pi^{(s)}} t'\left(i_{2l-1}^{(s)} ,i_{2l}^{(s)}\right).
\end{equation*}

Hence,
\begin{align*}
&\prod_{s\in A} \left|R\left(i_1^{(s)},\ldots ,i_{2\# B_\pi^{(s)}}^{(s)}\right)\right| \ \prod_{s\in A^c} \prod_{l=1}^{\# B_\pi^{(s)}} \left|t\left(i_{2l-1}^{(s)} ,i_{2l}^{(s)}\right)\right| \\
&\qquad\leq c(k) \ \left|R\left(i_1^{(1)},\ldots ,i_{2\# B_\pi^{(1)}}^{(1)}\right)\right| \
\prod_{s=2}^{\#\pi} \ \prod_{l=1}^{\# B_\pi^{(s)}} t'\left(i_{2l-1}^{(s)} ,i_{2l}^{(s)}\right) \\
&\qquad\leq c(k) \ \sum_{j=1}^{\# B_\pi^{(1)}} \sum_{\sigma\in S_{2\# B_\pi^{(1)}}^j} \prod_{l=1}^j t'\left(i_{\sigma(2l-1)}^{(1)},i_{\sigma(2l)}^{(1)}\right) \prod_{s=2}^{\#\pi} \ \prod_{l=1}^{\# B_\pi^{(s)}} t'\left(i_{2l-1}^{(s)} ,i_{2l}^{(s)}\right).
\end{align*}

Now fix $j\in\{1,\ldots,\# B_\pi^{(1)}\}$ and $\sigma\in S_{2\# B_\pi^{(1)}}^j$. Define a graph $G'$ obtained from $G$ by deleting the edges in $E^{(1)}(G)$. Since all edges in $E^{(1)}(G)$ belonged to different components by Remark~\ref{samecomp}, $G'$ has still $k-\#\pi+1$ components. Denote by $G'_j$ the graph obtained from $G'$ by inserting the edges
\begin{equation*}
\left\{\mu_{\sigma(2l-1)}\left(e_{\left\lceil \sigma(2l-1)/2\right\rceil}^{(1)}\right),\mu_{\sigma(2l)}\left(e_{\left\lceil \sigma(2l)/2\right\rceil}^{(1)}\right)\right\}, \qquad l=1,\ldots,j,
\end{equation*}

where $\mu_{\sigma(l)}:=\mu_1$ if $\sigma(l)$ is odd, and otherwise $\mu_{\sigma(l)}:=\mu_2$. The definition of the set $S_{2\# B_\pi^{(1)}}^j$ entails that there is at least one edge in $G_j'$ which was not contained in $G$. Hence, at least two components of $G'$ are merged when constructing $G_j'$. Consequently, $G_j'$ has $d\leq k-\#\pi$ components which are either complete cycles or open cycles, \ie one edge is missing. Denote those components by $G_{j,1}',\ldots,G_{j,d}'$. We now have
\begin{equation*}
\prod_{l=1}^j t'\left(i_{\sigma(2l-1)}^{(1)},i_{\sigma(2l)}^{(1)}\right) \prod_{s=2}^{\#\pi} \ \prod_{l=1}^{\# B_\pi^{(s)}} t'\left(i_{2l-1}^{(s)} ,i_{2l}^{(s)}\right)
= \prod_{l=1}^d \ \prod_{e\in E(G_{j,l}')} t'(i_{\mu_1(e)},i_{\mu_2(e)}),
\end{equation*}

and
\begin{equation*}
\sum_{(i_1,\ldots,i_k) \in \mathcal{T}_m (\pi)} \prod_{l=1}^d \ \prod_{e\in E(G_{j,l}')} t'(i_{\mu_1(e)},i_{\mu_2(e)})
\leq \prod_{l=1}^d \ \sum_{i_1,\ldots,i_{\# G_{j,l}'}=1}^m \ \prod_{e\in E(G_{j,l}')} t'(i_{\mu_1(e)},i_{\mu_2(e)}).
\end{equation*}

The exponential form of $t'(i,j)$ ensures that the right hand side is of order $m^d$. Consequently,
\begin{equation*}
\sum_{(i_1,\ldots,i_k) \in \mathcal{T}_m (\pi)} R'\left(i_1,\ldots,i_{k}\right) = \mathcal{O}(m^{k-\#\pi}),
\end{equation*}

implying
\begin{multline}
\lim_{n\to\infty} \frac 1 m \E\left[\tr\left(\W_n^k\right)\right] = \\
\lim_{n\to\infty} \sum_{\pi\in \mathrm{NC}(k)} \frac{1}{m n^{k-\#\pi}} \sum_{(i_1,\ldots,i_k) \in \mathcal{T}_m (\pi)} \prod_{l=1}^{k-\#\pi+1} \prod_{j=1}^{\#G_l(\pi)} t(i_{v_j(G_l(\pi))},i_{v_{j+1}(G_l(\pi))}),
\label{eq2}
\end{multline}

if the limits exist.

\subsection{An extension of the set $\mathcal{T}_m (\pi)$}

If we summed over all elements $(i_1,\ldots,i_k)\in\{1,\ldots,m\}^k$ instead of over all tuples in $\mathcal{T}_m (\pi)$, we would obtain traces of powers of $\T_m$ on the right hand side of \eqref{eq2} which would enable us to calculate the limit. The next lemma  gives us the justification to do so.

\begin{lemma}
We have
\begin{equation*}
\begin{split}
& \lim_{n\to\infty}\frac 1 m \E\left[\tr\left(\W_n^k\right)\right] \\
& \qquad = \lim_{n\to\infty} \sum_{\pi\in \mathrm{NC}(k)} \frac{1}{m n^{k-\#\pi}} \sum_{i_1,\ldots,i_k=1}^m \prod_{l=1}^{k-\#\pi+1} \ \prod_{j=1}^{\#G_l(\pi)} t(i_{v_j(G_l(\pi))},i_{v_{j+1}(G_l(\pi))}) \\
& \qquad = \lim_{n\to\infty} \sum_{\pi\in \mathrm{NC}(k)} \frac{1}{m n^{k-\#\pi}} \prod_{l=1}^{k-\#\pi+1} \tr\left(\T_m^{\#G_l(\pi)}\right).
\end{split}
\end{equation*}
\label{fin}
\end{lemma}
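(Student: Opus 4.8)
The plan is to prove the two equalities separately: the second one is a purely algebraic factorization, while the first one requires showing that replacing the constrained index set $\mathcal{T}_m(\pi)$ by the full cube $\{1,\dots,m\}^k$ only changes the sum by a negligible amount.

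For the second equality I would use that every vertex of $G(\pi)$ has degree two (Remarks after Definition~\ref{def}), so its connected components $G_1(\pi),\dots,G_{k-\#\pi+1}(\pi)$ are cycles with pairwise disjoint vertex sets whose union is $\{1,\dots,k\}$. Hence the sum over all $\textbf{i}=(i_1,\dots,i_k)\in\{1,\dots,m\}^k$ factorizes over the components, and for a component that is a cycle $(v_1,\dots,v_d)$ the corresponding factor is $\sum_{a_1,\dots,a_d=1}^m t(a_1,a_2)t(a_2,a_3)\cdots t(a_d,a_1)=\tr(\T_m^{d})$. Multiplying these gives $\prod_{l}\tr(\T_m^{\#G_l(\pi)})$.

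For the first equality I would start from \eqref{eq2} and reduce to showing that, for each fixed $\pi\in\mathrm{NC}(k)$, the sum of $\prod_{l}\prod_{j} t(i_{v_j(G_l(\pi))},i_{v_{j+1}(G_l(\pi))})$ over the complement $\{1,\dots,m\}^k\setminus\mathcal{T}_m(\pi)$ is $o(mn^{k-\#\pi})$. First I would observe that (A1) and (A2) applied with $k=1$ force $\E[a(i,1)]=0$ and $|t(i,i')|\le c(1)\,t'(i,i')$, so that product is dominated by $c(1)^k\prod_{e\in E(G(\pi))}t'(i_{\mu_1(e)},i_{\mu_2(e)})$ in the notation of Section~\ref{reduction}. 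Since $\{1,\dots,m\}^k\setminus\mathcal{T}_m(\pi)=\bigsqcup_{G'\in\mathcal{I}(\pi),\,G'\neq G(\pi)}\mathcal{T}_m(\pi,G')$ is a finite disjoint union, the task becomes to bound $\sum_{\textbf{i}\in\mathcal{T}_m(\pi,G')}\prod_{e\in E(G(\pi))}t'(i_{\mu_1(e)},i_{\mu_2(e)})$ for each such $G'$.

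The key step, and the one I expect to be the main obstacle, is the comparison: for every $G'\in\mathcal{I}(\pi)$ and every $\textbf{i}\in\mathcal{T}_m(\pi,G')$,
$$\prod_{e\in E(G(\pi))}t'(i_{\mu_1(e)},i_{\mu_2(e)})\ \le\ \prod_{e\in E(G')}t'(i_{\mu_1(e)},i_{\mu_2(e)}).$$
To prove it I would fix a closed block $\overline{B}_\pi^{(s)}$: by (C1) both $E_\pi^{(s)}(G(\pi))$ and $E_\pi^{(s)}(G')$ are perfect matchings of the family of reals $(i_l)_{l\in\overline{B}_\pi^{(s)}}$, each $i_l$ taken with multiplicity $m_\pi(l)$, while by construction $E_\pi^{(s)}(G')$ pairs precisely the reals that are consecutive in decreasing order. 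Because $t'(i,i')=\alpha^{|i-i'|}$ with $\alpha\in[0,1)$ and the nearest-neighbour pairing of a finite ordered family minimizes the total gap $\sum_e|i_{\mu_1(e)}-i_{\mu_2(e)}|$ over all perfect matchings of that family (a standard uncrossing/exchange argument), one gets $\prod_{e\in E_\pi^{(s)}(G(\pi))}t'\le\prod_{e\in E_\pi^{(s)}(G')}t'$; taking the product over $s=1,\dots,\#\pi$ and invoking (C2) yields the comparison. Granting it, I can drop the constraint $\textbf{i}\in\mathcal{T}_m(\pi,G')$ on the larger side and factorize over the vertex-disjoint cyclic components of $G'$, exactly as in Section~\ref{reduction}, to obtain
$$\sum_{\textbf{i}\in\mathcal{T}_m(\pi,G')}\prod_{e\in E(G(\pi))}t'(i_{\mu_1(e)},i_{\mu_2(e)})\ \le\ \sum_{\textbf{i}\in\{1,\dots,m\}^k}\prod_{e\in E(G')}t'(i_{\mu_1(e)},i_{\mu_2(e)})\ =\ \prod_{l=1}^{r(G')}\tr\big((\T_m')^{\#G'_l}\big)\ =\ \mathcal{O}\big(m^{r(G')}\big),$$
using that $\T_m'=(\alpha^{|i-i'|})$ has exponentially decaying entries so $\tr((\T_m')^d)=\mathcal{O}(m)$. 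Finally Lemma~\ref{maximum} gives $r(G')\le k-\#\pi+1$, and equality would put $G'$ into $\mathcal{I}^*(\pi)$, which by Lemma~\ref{ncr}(ii) equals $\{G(\pi)\}$; hence $r(G')\le k-\#\pi$ for $G'\neq G(\pi)$, the whole complement sum is $\mathcal{O}(m^{k-\#\pi})$, and dividing by $mn^{k-\#\pi}$ and using $m\sim yn$ shows it is $\mathcal{O}(1/n)$, which tends to $0$. This establishes the first equality and completes the proof.
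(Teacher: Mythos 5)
Your proof is correct, and while it reaches the same $\mathcal{O}(m^{k-\#\pi})$ bound for the complement of $\mathcal{T}_m(\pi)$, it gets there by a genuinely different combinatorial route. The paper picks, for each bad tuple, a single pair of ``value-crossing'' edges $e,e'$ of $G(\pi)$ inside one closed block, uses the exact multiplicative identity \eqref{trel} to swap them, and invokes Remark~\ref{samecomp} to see that the swap merges two components, so the swapped graph has only $k-\#\pi$ components; the complement sum is then dominated by summing over all pairs $(e,e')$. You instead partition $\{1,\ldots,m\}^k$ into the sets $\mathcal{T}_m(\pi,G')$, $G'\in\mathcal{I}(\pi)$, and prove the global domination $\prod_{e\in E(G(\pi))}t'\le\prod_{e\in E(G')}t'$ on $\mathcal{T}_m(\pi,G')$ from the fact that, within each closed block, the sorted-consecutive matching (which is what $G'$ realizes there, by the construction of the induced graph) minimizes the total gap among all perfect matchings of the multiset $\{i_l:\ l\in\overline{B}_\pi^{(s)}\}$ counted with multiplicities $m_\pi(l)$; the needed component count $r(G')\le k-\#\pi$ for $G'\ne G(\pi)$ then comes from Lemma~\ref{maximum} together with the uniqueness statement $\mathcal{I}^*(\pi)=\{G(\pi)\}$ of Lemma~\ref{ncr}. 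What your version buys: it sidesteps the paper's slightly delicate claim that every tuple outside $\mathcal{T}_m(\pi)$ exhibits a specific crossing pair for $G(\pi)$ (a statement that needs care in the presence of ties), it makes the bound uniform over the finitely many graphs $G'$, and you correctly justify $|t(i,i')|\le c\,t'(i,i')$ from (A1)/(A2) at $k=1$. What it costs: you must supply the matching-optimality lemma (standard, provable by counting how many matched pairs span each consecutive gap in the sorted order), whereas the paper only needs the one-swap identity, which is an equality and requires no optimization argument. The treatment of the second equality (factorization of the full sum over the vertex-disjoint cyclic components into traces of powers of $\T_m$) is identical to the paper's.
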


\begin{proof}
We have already seen how the second equality can be obtained, \eg in Section \ref{reduction}. To see that the first equality holds, let $\pi\in \mathrm{NC}(k)$ and put $G:=G(\pi)$. We want to verify that $\left(\mathcal{T}_m (\pi)\right)^c$ gives negligible contribution to the limit in \eqref{eq2}. Thus fix $(i_1,\ldots,i_k) \in \left(\mathcal{T}_m (\pi)\right)^c$. Being not an element of the set $\mathcal{T}_m (\pi)$ means by definition that there is an $s=1,\ldots,\#\pi$ and two edges $e, e' \in E(G)$ such that the ends $\mu_1(e), \mu_2(e)$ and $\mu_1(e'), \mu_2(e')$ are elements of $\overline{B}_\pi^{(s)}$ but $i_{\mu_1(e)}\leq i_{\mu_1(e')}\leq i_{\mu_2(e)}\leq i_{\mu_2(e')}$. We then have
\begin{align}
\begin{split}
t'(i_{\mu_1(e)},i_{\mu_2(e)}) \ t'(i_{\mu_1(e')},i_{\mu_2(e')})
& = \alpha^{i_{\mu_2(e)}-i_{\mu_1(e)}+i_{\mu_2(e')}-i_{\mu_1(e')}} \\
& = t'(i_{\mu_1(e)},i_{\mu_2(e')}) \ t'(i_{\mu_1(e')},i_{\mu_2(e)}).
\end{split}
\label{trel}
\end{align}

Since $G$ has the maximum number of components, Remark~\ref{samecomp} yields that $e$ and $e'$ belong to different components of $G$. Consequently, substituting the edges $e$ and $e'$ by the edges $\left\{{\mu_1(e'),\mu_2(e)}\right\}$ and $\left\{{\mu_1(e),\mu_2(e')}\right\}$  leads to a new graph $G^{(e,e')}\in\mathcal{I}(\pi)$ that has $r(G^{(e,e')})=k-\#\pi$ components. These considerations and relation \eqref{trel} imply
\begin{equation*}
\prod_{l=1}^{k-\#\pi+1} \ \prod_{j=1}^{\#G_l} t'(i_{v_j(G_l)},i_{v_{j+1}(G_l)})
= \prod_{l=1}^{k-\#\pi} \ \prod_{j=1}^{\#G^{(e,e')}_l} t'\left(i_{v_j\left(G^{(e,e')}_l\right)},i_{v_{j+1}\left(G^{(e,e')}_l\right)}\right).
\end{equation*}

Further, by assumption (A2), we know that $|t(i,i')|\leq c t'(i,i')$ for any $i,i'\in\N$. Hence we obtain
\begin{multline*}
\sum_{(i_1,\ldots,i_k) \in \left(\mathcal{T}_m (\pi)\right)^c} \prod_{l=1}^{k-\#\pi+1} \ \prod_{j=1}^{\#G_l} \left|t\left(i_{v_j(G_l)},i_{v_{j+1}(G_l)}\right)\right| \\
\leq \sum_{e,e'\in E(G)} \sum_{i_1,\ldots,i_k=1}^m \prod_{l=1}^{k-\#\pi} \ \prod_{j=1}^{\#G^{(e,e')}_l} t'\left(i_{v_j\left(G^{(e,e')}_l\right)},i_{v_{j+1}\left(G^{(e,e')}_l\right)}\right) = \mathcal{O} \left(m^{k-\#\pi}\right),
\end{multline*}
and, consequently,
\begin{equation*}
\lim_{n\to\infty} \sum_{\pi\in \mathrm{NC}(\pi)} \frac{1}{m n^{k-\#\pi}} \sum_{(i_1,\ldots,i_k) \in (\mathcal{T}_m (\pi))^c} \prod_{l=1}^{k-\#\pi+1} \ \prod_{j=1}^{\#G_l(\pi)} t\left(i_{v_j(G_l(\pi))},i_{v_{j+1}(G_l(\pi))}\right) = 0.
\end{equation*}

This completes the proof.
\end{proof}

Lemma~\ref{fin} and relation \eqref{mt} now yield
\begin{equation}
\begin{split}
\lim_{n\to\infty}\frac 1 n \E\left[\tr\left(\W_n^k\right)\right]
&= \lim_{n\to\infty} \sum_{\pi\in \mathrm{NC}(k)} \ \left(\frac{m}{n}\right)^{k-\#\pi} \ \prod_{l=1}^{k-\#\pi+1} \frac{1}{m} \ \tr\left(\T_m^{\#G_l(\pi)}\right) \\
&= \sum_{\pi\in \mathrm{NC}(k)} \ y^{k-\#\pi} \ \prod_{l=1}^{k-\#\pi+1} H_{\#G_l(\pi)}.
\end{split}
\label{limit6}
\end{equation}

\subsection{Non-crossing partitions}

Recall that for any $\pi\in \mathrm{NC}(k)$, Lemma~\ref{ncr} provided us with the identity $K(\eta)=\pi$, if $\eta$ denotes the partition induced by $G(\pi)$. Since $K:\mathrm{NC}(k)\rightarrow\mathrm{NC}(k)$ is a bijection, and $\# K(\eta)=k-\#\eta+1$, we find that
\begin{equation*}
\sum_{\pi\in \mathrm{NC}(k)} \ y^{k-\#\pi} \ \prod_{l=1}^{k-\#\pi+1} H_{\#G_l(\pi)} = \sum_{\pi\in \mathrm{NC}(k)} \ y^{\#\pi-1} \ \prod_{l=1}^{\#\pi} \ H_{\# B_{\pi}^{(s)}}.
\end{equation*}

To finally obtain the representation in Theorem~\ref{main}, we want to sort the partitions in $\mathrm{NC}(k)$ by the number and the size of their blocks. Thus, define for any $i_1,\ldots,i_k\in\{1,\ldots,k\}$ the set $\mathrm{NC}(k;i_1,\ldots,i_k)$ of all partitions in $\mathrm{NC}(k)$ with $i_l$ blocks of size $l$, $l=1,\ldots,k$. Note that if some partition $\pi\in \mathrm{NC}(k;i_1,\ldots,i_k)$ has $q=k-s+1$ blocks, then $i_{s+1}=\ldots=i_k=0$. Indeed, if $\pi$ had a block consisting of at least $s+1$ elements, we would have at most $k-(s+1)$ elements left to form the remaining $q-1=k-s$ blocks. Hence, we will write $\mathrm{NC}(k;i_1,\ldots,i_s)$ instead of $\mathrm{NC}(k;i_1,\ldots,i_k)$ whenever $\#\pi=k-s+1$. Note that $i_1+ \ldots +i_s$ is the number of blocks and $i_1+2i_2+ \ldots +si_s$ the total number of elements of any partition $\pi\in \mathrm{NC}(k;i_1,\ldots,i_s)$. Thus we consider only tuples $(i_1,\ldots,i_s)\in\{0,\ldots,k-s+1\}^s$ satisfying $i_1+ \ldots +i_s=k-s+1$ and $i_1+2i_2+ \ldots +si_s=k$. Now we get
\begin{equation*}
\sum_{\pi\in \mathrm{NC}(k)} \ y^{\#\pi-1} \ \prod_{l=1}^{\#\pi} \ H_{\# B_{\pi}^{(s)}}
= \sum_{s=1}^k \ y^{k-s} \ \sum_{\substack{i_1+\ldots + i_s = k-s+1, \\ i_1 +2i_2+\ldots + si_s=k}} \# \mathrm{NC}(k;i_1,\ldots,i_s) \ \prod_{l=1}^{s} H_{l}^{i_l}.
\end{equation*}

It remains to determine $\# \mathrm{NC}(k;i_1,\ldots,i_s)$. This can be achieved with the help of

\begin{lemma}[\cite{Kreweras72}, Theorem 4]
The number of non-crossing partitions $\pi\in\mathcal{P}(k)$ with $i_l$ blocks of size $l$, $l=1,\ldots,k$, equals
\begin{equation*}
\frac{k!}{(k-q+1)!}\ \frac{1}{i_1!\cdot\ldots\cdot i_k!},
\end{equation*}
where $q:=\#\pi = i_1+\ldots+i_k$.
\label{noncr}
\end{lemma}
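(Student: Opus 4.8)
The plan is to prove Lemma~\ref{noncr} by a bijective/recursive argument that simultaneously tracks the number of blocks and the block-size profile. Write $q = i_1 + \cdots + i_k$ for the number of blocks, so that $k = i_1 + 2 i_2 + \cdots + k i_k$. The claim is that $\#\mathrm{NC}(k; i_1,\ldots,i_k) = \frac{k!}{(k-q+1)!}\,\frac{1}{i_1!\cdots i_k!}$. The two natural routes are: (a) set up a recursion on $k$ by looking at the block containing the element $k$ (or, in the non-crossing world, at the block containing $1$, which always forms an ``interval-nested'' structure), and solve it; or (b) exploit a known product/Lagrange-inversion formula for the generating function of non-crossing partitions refined by block sizes. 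I would take route (a) since everything in the paper is elementary and combinatorial.

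First I would recall the standard structural fact about non-crossing partitions: if $\pi \in \mathrm{NC}(k)$ and $B$ is the block containing the element $1$, with $B = \{1 = b_1 < b_2 < \cdots < b_r\}$, then $\pi$ restricted to each of the ``gaps'' $\{b_j+1, \ldots, b_{j+1}-1\}$ (and the final segment $\{b_r+1,\ldots,k\}$) is itself an arbitrary non-crossing partition of that interval, and these choices are independent. This gives a decomposition of $\mathrm{NC}(k)$ indexed by the block $B$ of $1$ and non-crossing partitions of the $r$ complementary intervals. Translating this into refined counts, one obtains a convolution identity for the numbers $N(k;i_1,\ldots,i_k)$: summing over the size $r$ of the block of $1$ and over how the remaining $k-r$ elements are split into $r$ consecutive intervals, and over the size profiles of the induced partitions on those intervals. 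The key point is that the claimed formula $\frac{k!}{(k-q+1)!\, i_1!\cdots i_k!}$ is exactly the multinomial-type quantity that satisfies this convolution; one checks this either by a direct (if slightly tedious) manipulation of multinomial coefficients, or more slickly by the following observation.

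A cleaner variant I would actually prefer: interpret $\frac{k!}{(k-q+1)!}\frac{1}{i_1!\cdots i_k!}$ as counting something concrete and exhibit a bijection. Consider the Kreweras-complement / "cycle lemma" approach. By a classical encoding, non-crossing partitions of $\{1,\ldots,k\}$ with $q$ blocks are in bijection with certain labelled forests, and more precisely the number with a prescribed block-size profile is, by the Lagrange inversion formula applied to $B(x) = 1 + \sum_{l\ge 1} x^l B(x)^l$ (the generating function where $x^l$ marks a block of size $l$), given by extracting a coefficient; Lagrange inversion yields precisely $\frac{1}{k}\binom{k}{q-1}$-type expressions refined by the multinomial $\binom{q}{i_1,\ldots,i_k}$ against the composition count $\binom{k}{q-1}$ — and unwinding this gives $\frac{k!}{(k-q+1)!\,i_1!\cdots i_k!}$. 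I would present the generating-function computation: let $F(x; u_1, u_2, \ldots) = \sum_{\pi \in \mathrm{NC}} \prod_l u_l^{(\#\ell\text{-blocks})}\, x^{|\pi|}$, use the block-of-$1$ decomposition to get the functional equation $F = 1 + x\sum_{l\ge 1} u_l (xF)^{l-1}\cdot(\text{something})$ — more precisely $F = 1 + \sum_{l \ge 1} u_l\, x^l F^l$ since each of the $l$ gaps created by an $l$-block independently carries a copy of $F$ and the block itself contributes $x^l u_l$ wait, that overcounts $x$; the correct bookkeeping is $F = 1 + \sum_{l\ge1} u_l x^l F^l$ — then apply multivariate Lagrange inversion to read off $[x^k \prod u_l^{i_l}] F$.

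The main obstacle, and where I would spend the most care, is the bookkeeping in whichever decomposition I use: making sure the ``gaps'' created by the block of $1$ are counted with the right multiplicity and that the exponents of $x$ (tracking total size) and of the $u_l$ (tracking the profile) add up correctly, so that the functional equation is exactly $F = 1 + \sum_{l\ge 1} u_l x^l F^l$. Once that equation is pinned down, the extraction of $[x^k \prod_l u_l^{i_l}]F$ via Lagrange inversion is mechanical: writing $\phi(w) = \sum_{l\ge1} u_l x^l w^l$... — actually cleaner to set $F = 1 + G$ with $G = \sum u_l x^l (1+G)^l$ and invert — and it produces $\frac{1}{k}$ times a product of a multinomial coefficient $\binom{q}{i_1,\ldots,i_k}$ (choosing which of the $q$ blocks has which size) and a coefficient $\binom{k}{q-1}$ (from expanding the power), which simplifies to $\frac{(k-1)!}{(k-q+1)!}\cdot\frac{q}{i_1!\cdots i_k!} \cdot \frac{1}{q}$... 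I would verify the final simplification lands on $\frac{k!}{(k-q+1)!\,i_1!\cdots i_k!}$ and then sanity-check against the known total $\frac{1}{q}\binom{k}{q-1}\binom{k}{q}$ for $\#\mathrm{NC}^{(q)}(k)$ by summing over profiles, which is exactly the Vandermonde-type identity one expects. Since the paper only needs this counting identity and already cites \cite{Kreweras72} for it, the cleanest exposition may in fact be to simply cite that reference and include the generating-function derivation as a remark for completeness.
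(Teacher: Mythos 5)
First, note that the paper does not prove this lemma at all: it is quoted verbatim from Theorem 4 of \cite{Kreweras72}, so there is no internal argument to compare yours against. Your outline is nonetheless the standard route to this identity, and its first half is sound: the block of $1$ in a non-crossing partition of $\{1,\ldots,k\}$, say of size $r$, cuts the remaining elements into $r$ intervals each carrying an independent non-crossing partition, which yields exactly the functional equation $F = 1 + \sum_{l\ge 1} u_l x^l F^l$ that you eventually settle on.

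The gap is in the extraction step, which you declare ``mechanical'' but do not carry out correctly. The equation $F = 1+\sum_l u_l x^l F^l$ is not of the form $W = x\,\phi(W)$ (the variable $x$ occurs with varying exponents inside the sum), so single-variable Lagrange inversion in $x$ does not apply as written, and the prefactor $\tfrac1k$ you quote is wrong: one has $\tfrac{1}{k}\binom{q}{i_1,\ldots,i_k}\binom{k}{q-1} = \tfrac{q}{k}\cdot\tfrac{k!}{(k-q+1)!\,i_1!\cdots i_k!}$, which misses the target by the factor $q/k$ (your displayed simplification likewise produces $(k-1)!$ where $k!$ is needed). The fix is to invert in an auxiliary variable that marks \emph{blocks}: set $W = F-1$ and $W = t\,\psi(W)$ with $\psi(w) = \sum_{l\ge1} u_l x^l (1+w)^l$; Lagrange inversion in $t$ gives $[t^q]W = \tfrac1q [w^{q-1}]\psi(w)^q$, and since every block carries exactly one power of $t$, picking out the monomial $x^k\prod_l u_l^{i_l}$ yields $\tfrac1q\binom{q}{i_1,\ldots,i_k}[w^{q-1}](1+w)^{\sum_l l\, i_l} = \tfrac1q\binom{q}{i_1,\ldots,i_k}\binom{k}{q-1}$, which does equal $\tfrac{k!}{(k-q+1)!\,i_1!\cdots i_k!}$. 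With that correction, and a commitment to one of your two proposed routes rather than both half-heartedly, the argument closes; as you yourself observe, simply citing \cite{Kreweras72} as the paper does is also a legitimate option.
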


We thus obtain that
\begin{equation*}
\lim_{n\to\infty}\frac 1 m \E\left[\tr\left(\W_n^k\right)\right] = \sum_{s=1}^{k} \ y^{k-s} \ \frac{k!}{s!} \ \sum_{\substack{i_1+\ldots + i_s = k-s+1, \\ i_1 +2i_2+\ldots + si_s=k}} \ \prod_{l=1}^{s} \frac{H_{l}^{i_l}}{i_l!},
\end{equation*}

which is exactly the statement of Theorem~\ref{main}.

\bibliography{bib}

\end{document}